\journal{Discrete Applied Mathematics} 
\def\N{\mathbb N}
\def\uu{\mathbf u}
\def\vv{\mathbf v}
\def\tt{\mathbf t}
\def\Der{\mathrm{d}}
\def\nrc{nr\mathcal{C}}
\begin{document}

\newtheorem{theorem}{Theorem}
\newtheorem{corollary}[theorem]{Corollary}
\newtheorem{lemma}[theorem]{Lemma}
\newtheorem{claim}[theorem]{Claim}
\newtheorem{proposition}[theorem]{Proposition}
\newtheorem{definition}[theorem]{Definition}
\newtheorem{example}[theorem]{Example}

\begin{frontmatter}


\title{On non-repetitive complexity of Arnoux--Rauzy words\tnoteref{funding}}
\tnotetext[funding]{The research received funding from the Ministry of Education, Youth and Sports of the Czech Republic through the project no. CZ.02.1.01/0.0/0.0/16 019/0000778 and from the Czech Technical University in Prague through the project SGS17/193/OHK4/3T/14. The last author acknowledges partial funding via a Welcome Grant of the University of Liege.}

\author[FNSPE]{Kate\v{r}ina Medkov\'{a}\corref{corA}}
 \ead{katerinamedkova@gmail.com}
 \cortext[corA]{Corresponding author}
\author[FNSPE]{Edita Pelantov\'a}
\author[LIEGE]{\'Elise Vandomme}

\address[FNSPE]{
Department of Mathematics, FNSPE, Czech Technical University in Prague,
Trojanova 13, 120 00 Praha 2, Czech Republic}

\address[LIEGE]{
Department of Mathematics, University of Liege,
Allée de la découverte 12, 4000 Liege, Belgium}

\begin{abstract}
The non-repetitive complexity $nr\mathcal{C}_{\bf u}$ and the initial non-repetitive complexity $inr\mathcal{C}_{\bf u}$ are functions which reflect  the  structure of the infinite word ${\bf u}$ with respect to the repetitions of factors of a given length. 
We  determine $nr\mathcal{C}_{\bf u}$  for the Arnoux--Rauzy words and $inr\mathcal{C}_{\bf u}$ for the  standard  Arnoux--Rauzy words. Our main tools   are $S$-adic representation of Arnoux--Rauzy words and description of return words to their  factors. The formulas we obtain are then used   to evaluate  $nr\mathcal{C}_{\bf u}$ and  $inr\mathcal{C}_{\bf u}$  for the $d$-bonacci word.
\end{abstract}

\begin{keyword}
Arnoux--Rauzy word \sep directive sequence \sep factor complexity \sep non-repetitivity  
\MSC 68R15
\end{keyword}
\end{frontmatter}


\section{Introduction} 

Variability of an infinite word ${\bf u} = u_0u_1u_2\cdots$ over a finite alphabet can be judged from distinct points of view depending on applications or combinatorial properties one is interested in.  
The factor complexity of ${\bf u} $, here denoted $\mathcal{C}_{\bf u}$, is a function which to any $n\in \mathbb{N}$ assigns the number 
 of distinct factors of length $n$ occurring in $\uu$.  
More formally, $\mathcal{C}_{\bf u}(n) = \#\{ u_iu_{i+1}\cdots u_{i+n-1}\ :\ i \in \mathbb{N}\}$. 

For the simplest infinite words, namely the eventually periodic words, the factor complexity is bounded from above by a constant. 
In \cite{MoHe40}, Morse and Hedlund showed that the factor complexity of an infinite word which is not eventually periodic satisfies $\mathcal{C}_{\bf u}(n)\geq n+1 $ for each $n \in \mathbb{N}$. 
If the equality takes place for each $n$, the word ${\bf u}$ is called Sturmian.
Sturmian words represent the most intensively studied class of infinite words.
To measure the regularity of an infinite word, Morse and Hedlund introduced the recurrence function $R_{\bf u}$.
The value $R_{\bf u}(n)$ is defined to be the minimal integer $m$ such that any factor of $\uu$ of length $n$ occurs at least once in $u_{i}u_{i+1}u_{i+2}\cdots u_{i+m-1}$ for every $i \in \mathbb{N}$.
In the same paper \cite{MoHe40}, the authors evaluated $R_{\uu}(n)$ for any Sturmian word.   
  
A dual function to $R_{\bf u}$ was recently introduced by Moothathu \cite{Mo12} under the name non-repetitive complexity function $nr\mathcal{C}_{\uu}$. 
The value $nr\mathcal{C}_{\uu}(n)$ is defined as the maximal $m$ such that for some $i \in \mathbb{N}$ any factor of $\uu$ of length $n$ occurs at most once in    $u_{i}u_{i+1}u_{i+2}\cdots u_{i+m+n-2}$. 
He also considered a ``prefix variant'' of this function called the initial non-repetitive complexity function $inr\mathcal{C}_{\uu}$. 
By definition, $inr\mathcal{C}_{\uu}(n)$ is the maximal length $m$ of a prefix of $\uu$ such that each factor of $\uu$ of length $n$ occurs in $u_0u_1\cdots u_{m+n-2}$ at most once. 
Obviously, 
$$ inr\mathcal{C}_{\uu}(n) \ \leq \ nr\mathcal{C}_{\uu}(n)\ \leq \   \mathcal{C}_{\uu}(n)\  \leq \  R_{\uu}(n) -n+1 \quad \text{for each } n \in\mathbb{N}. $$
Moothathu's concept of the initial non-repetitive complexity function was developed in \cite{NiRa16} by  Nicholson and Rampersad. 
They described some general properties of $inr\mathcal{C}_{\uu}$ and evaluated $inr\mathcal{C}_{\uu}$ for  the Fibonacci, Tribonacci and Thue--Morse words.
Note that the Fibonacci word and the Tribonacci word belong to the class of standard binary and ternary, respectively, Arnoux--Rauzy words.
The Arnoux--Rauzy words represent one of the generalizations of Sturmian words to multi-letter alphabets.
The recurrence function $R_{\uu}$ for Arnoux--Rauzy words was determined  in \cite{CaCh}. 
The initial  non-repetitive complexity function for Sturmian sequences was recently studied  by Bugeaud and Kim \cite{BuKi19}.  Their motivation for this study comes from the connection between the irrational exponent of a number $x$   and $inr\mathcal{C}_{\uu}$, where ${\uu}$ corresponds to the expansion of $x$ in a given base.

In the present article we focus on the non-repetitive complexity of Arnoux--Rauzy words.
Using the $S$-adic representation of a given Arnoux--Rauzy word $\uu$, we provide  in Theorem  \ref{thm:nonrepetitive_complexity_arnoux_rauzy} a formula for computing $nr\mathcal{C}_{\uu}(n)$ for each $n \in \mathbb{N}$. 
In particular, we show  (Theorem \ref{thm:nonrepetitive_complexity_sturmian}) that any Sturmian word (i.e., binary Arnoux--Rauzy word) $\uu$ satisfies $ nr\mathcal{C}_{\uu}(n) = \mathcal{C}_{\uu}(n) $ for each $n \in \mathbb{N}$.
It is interesting that this phenomenon can be observed also among the words with the maximal factor complexity.
In \cite{NiRa16}, the authors constructed a word over $q$ letter alphabet such that $q^n  =\mathcal{C}_{\uu}(n) = nr\mathcal{C}_{\uu}(n)$. 

For standard Arnoux--Rauzy words we determine in Theorem \ref{thm:bonacci_initial_nonrepetitive_complexity} also $inr\mathcal{C}_{\uu}$ and thus we gene\-ralize Nicholson and Rampersad's result on the Fibonacci and the Tribonacci words.

\section{Preliminaries} \label{sec:preliminaries}

An \emph{alphabet} $\mathcal{A}$ is a finite set of symbols called \emph{letters}.
Here we fix the alphabet $\mathcal{A} = \{0, 1, \ldots, d-1\}$, where $d$ is a positive integer.
A \emph{word} $w=w_0\cdots w_{n-1}$ over $\mathcal{A}$ is a finite sequence of letters from $\mathcal{A}$.
The number of its letters is called the \emph{length} of $w$ and it is denoted by $|w|=n$. 
The notation $|w|_a$ is used for the number of
occurrences of the letter $a$ in $w$. The \emph{empty word}, i.e., the unique word of length zero, is denoted by $\varepsilon$. 
The \emph{concatenation} of words $v=v_0\cdots v_{k}$ and $w=w_0\cdots w_{\ell}$ is the word $vw=v_0\cdots v_{k}w_0\cdots w_\ell$. 
The set of all finite words over $\mathcal{A}$ equipped with the operation concatenation of words is a free monoid and it is denoted $\mathcal{A}^*$. 
The \emph{Parikh vector} of a word $w\in \mathcal{A}^*$ is the vector $\vec{V}(w)=(|w|_{0},  |w|_{1}, \ldots, |w|_{d-1})^\top$.
Obviously, $|w|=(1,1, \cdots, 1) \cdot \vec{V}(w)$.

An infinite sequence of letters $\uu=(u_i)_{i\geq 0}$ in $\mathcal{A}$ is called \emph{infinite word}. 
The set of all infinite words over $\mathcal{A}$ is denoted $\mathcal{A}^{\N}$. 
The word $\uu\in \mathcal{A}^{\N}$ is said to be \emph{eventually periodic} if it is of the form $\uu=vz^{\omega}$, where $v,z\in \mathcal{A}^*$, $z\neq\varepsilon$ and $z^{\omega}=zzz\cdots$.  
Otherwise, $\uu$ is \emph{aperiodic}.    

A \emph{factor} of a (finite of infinite) word $w$ is a finite word $v$ such that $w=svt$ for some 
words $s,t \in \mathcal{A}^*$. 
Moreover, if $s=\varepsilon$, then $v$ is called a \emph{prefix} of $w$ and if $t=\varepsilon$, then $v$ is called a \emph{suffix} of $w$. 
The set of all factors of an infinite word $\uu$ is called the \emph{language} of $\uu$ and denoted by $\mathcal{L}_\uu$.
By $\mathcal{L}_\uu(n)$ we denote the set of factors of $\uu$ of length $n$, i.e., $\mathcal{L}_\uu(n) =\mathcal{L}_\uu \cap \mathcal{A}^n $.
Using this notation, the factor complexity of $\uu$ can be expressed as $\mathcal{C}_{\uu}(n) = \#\mathcal{L}_\uu(n) $ for every $n \in \N$. 
In this paper, we focus on the (initial) non-repetitive complexity.

\begin{definition}
The \emph{non-repetitive complexity} $\nrc_\uu$ and the \emph{initial non-repetitive complexity} $i\nrc_\uu$ of an infinite word $\uu$ are functions defined for each $n \in \N$ as follows
$$\nrc_\uu(n):=\max\{m\in\N: \exists k\in\N\text{ s.t. } u_i\cdots u_{i+n-1}\neq u_j\cdots u_{j+n-1}\ \forall i,j \text{ with }k\le i<j\le k+m-1\},$$
$$i\nrc_\uu(n):=\max\{m\in\N: u_i\cdots u_{i+n-1}\neq u_j\cdots u_{j+n-1}\ \forall i,j \text{ with }0\le i<j\le m-1\}\,.$$
\end{definition}

A factor $w$ of $\uu$ is \emph{right special} if there exist two distinct letters $a,b \in \mathcal{A}$ such that $wa$ and $wb$ belong to  $\mathcal{L}_\uu$.  
Analogously, $w$ is \emph{left special} if $aw$ and $bw$ belong to $\mathcal{L}_\uu$ for two distinct letters $a,b \in \mathcal{A}$. 
A factor which is both left and right special is called \emph{bispecial}.
If $\uu$ is aperiodic, then for any length $n$ at least one factor $w \in \mathcal{L}_\uu(n)$ is left special and at least one factor $v\in \mathcal{L}_\uu(n)$ is right  special.    

Factors of an infinite word $\uu$ can be visualized by the so-called \emph{Rauzy graphs} $\Gamma_\uu(n)$, $n \in \mathbb{N}$. 
The set of vertices of $\Gamma_\uu(n)$ is $\mathcal{L}_\uu(n)$  and the set of its edges is $\mathcal{L}_\uu(n+1)$.
An oriented edge $e\in  \mathcal{L}_\uu(n+1)$ starts in $u\in \mathcal{L}_\uu(n)$ and ends in $v\in\mathcal{L}_\uu(n)$ if $u$ is a prefix of $e$ and $v$ is a suffix of $e$.
If $w \in \mathcal{L}_\uu(n)$, we denote 
\begin{align*}
N_+(w) &= \{v \in \mathcal{L}_\uu(n)\,:\, w  \text{ is a prefix and} \ v \ \text{is a suffix of an edge}\  e \in \mathcal{L}_\uu(n+1)\}\,, \\
N_-(w) &= \{v \in \mathcal{L}_\uu(n)\,:\, v  \text{ is a prefix and} \ w \ \text{is a suffix of an edge}\  e \in \mathcal{L}_\uu(n+1)\}\,.
\end{align*} 
Any factor  $v \in \mathcal{L}_\uu(n+m)$  with a prefix $u\in \mathcal{L}_\uu(n)$ corresponds to an oriented path of length $m$ in $\Gamma_\uu(n)$ starting with the vertex $u$.  

The \emph{occurrence} of the word $w$ in $\uu = u_0u_1u_2 \cdots$ is every index $i\in \mathbb{N}$ such that $w$ is a prefix of the word $u_iu_{i+1}u_{i+2}\cdots$. 
The factor of length $n$ which occurs at the position $i$ is denoted by $f_n(i)$.   
Hence, $f_n(i)=w$ if $w \in \mathcal{L}_\uu(n)$ and $w$ is a prefix of $u_iu_{i+1}u_{i+2}\cdots$.
An infinite word $\uu$ is said to be \emph{recurrent} if each of its factors has at least two occurrences in $\uu$.
If $i<j$ are two consecutive occurrences of $w$ in $\uu$, then the word $u_{i}u_{i+1}\cdots u_{j-1}$ is called the \emph{return word} to $w$ in $\uu$. 
If the set of all return words to $w$ in $\uu$ is finite for each factor $w$ of $\uu$, the word $\uu$ is called \emph{uniformly recurrent}. 

A \emph{morphism} of the free monoid $\mathcal{A}^*$ is a map $\psi:\mathcal{A}^*\to \mathcal{A}^*$ such that $\psi(vw)=\psi(v)\psi(w)$ for all $v,w\in \mathcal{A}^*$.  
The \emph{incidence matrix} of $\psi$ is
$d\times d$ matrix $\boldsymbol{M}_{\!\psi}$ given by $[\boldsymbol{M}_{\!\psi}]_{ab}=|\psi(b)|_{a}$. 
The incidence matrix of $\psi$ can be used to compute the Parikh vector of the image of a word $w$ under $\psi$:
\begin{equation}\label{eq:parikh_vector_of_image_via_incid_matrix}
\vec{V}(\psi(w)) = \boldsymbol{M}_{\!\psi}\cdot\vec{V}(w)\,.
\end{equation}

The domain of a morphism $\psi$ of $\mathcal{A}^*$ can be naturally extended to $\mathcal{A}^{\N}$ by putting $\psi(\uu)=\psi(u_0u_1u_2\cdots ) = \psi(u_0)\psi(u_1)\psi(u_2)\cdots$. 
An infinite word $\uu$ is called a \emph{fixed point} of the morphism $\psi$ if $\uu=\psi(\uu)$.

\section{Arnoux--Rauzy words} \label{sec:arnoux_rauzy}

The Sturmian words can be described by many equivalent properties, for their list (which is far from being complete) see for example \cite{BaPeSt10}. 
These properties offer several possibilities for generalization. 
One of them was used by Arnoux and Rauzy in \cite{ArRo91} to introduce the words today known under their names. 

\begin{definition} \label{def:arnoux_rauzy}
A recurrent infinite word $\uu \in \mathcal{A}^\mathbb{N}$ is a \emph{$d$-ary Arnoux--Rauzy word} if for all $n$ it has $(d-1)n+1$ factors of length $n$ with exactly one left and one right special factor of length $n$. 
\end{definition}

Over the binary alphabet the Arnoux--Rauzy words coincide with the Sturmian words.  
The Arnoux--Rauzy words belong to a broader family of episturmian words (e.g., see \cite{GlJu09}).
They are also embedded in the very general concept of tree sets introduced in \cite{7people15} which comprises several generalizations of Sturmian words to multi-letter alphabet. 
The Arnoux--Rauzy words share many properties with the Sturmian words (e.g., see \cite{RiZa00, JuVu, Fo02}).
Here we recall some of them.
If $\uu$ is a $d$-ary Arnoux--Rauzy word, then 
\begin{itemize}
\item  there exists a dominant letter $a \in \mathcal{A}$ such that  $a$ occurs in each factor from $\mathcal{L}_\uu(2)$;
\item  $ \mathcal{L}_\uu$ is closed under reversal, i.e.,  $w=w_0w_1\cdots w_{n-1} \in \mathcal{L}_\uu$  implies $\bar{w}=w_{n-1}\cdots w_1w_0 \in \mathcal{L}_\uu$;
\item each  bispecial factor $w$ of $\uu$ is a  palindrome, i.e.,  $w=\bar{w}$; 
\item $\uu $ is uniformly recurrent;  
\item any factor of $\uu$ has exactly $d$ return words in $\uu$. 
\end{itemize}

On the other hand, some properties of Sturmian words are not present in $d$-ary Arnoux--Rauzy  words when $d\geq 3$. 
An example of such a property is  the so-called balancedness.
Already Hedlund and Morse \cite{MoHe40} proved that a binary aperiodic word $\uu$  is Sturmian if and only if for any pair $v, w \in \mathcal{L}_\uu$ of factors of the same length the inequality 
$|v|_a - |w|_a \leq  c=1$ holds for any letter $a \in \mathcal{A}$.
This property is not preserved in Arnoux--Rauzy words, even if the constant $c=1$ is allowed to depend on $d$. 
For a detailed study of this problem, see \cite{BeCaSt13}. 
  
If each prefix of an Arnoux--Rauzy word $\uu$ is left special, then $\uu$ is called \textit{standard}.
For each Arnoux--Rauzy word ${\bf v}$, there exists a unique standard Arnoux--Rauzy word $\uu$ such that $\mathcal{L}_\uu = \mathcal{L}_\vv$. 
We will work with the $S$-adic representation of the Arnoux--Rauzy words as described in \cite{Fo02}.
Therefore we define the set $S$ of elementary morphisms over the alphabet $\mathcal{A} = \{0,1,\ldots,d-1\}$.
\begin{equation} \label{eq:elementary_morphisms}
\text{ For } i=0,1,\ldots, d-1 \quad  \text{ we put } \quad    \varphi_i  \ : \  
\left\{ \begin{array}{l}
i\to i\,;\\
j\to ij \ \ \text{for } j\neq i\,. 
\end{array} \right.
\end{equation}

Any standard Arnoux--Rauzy word $\uu$ is an image of a standard Arnoux--Rauzy word $\uu'$ under a morphism $\varphi_i$, where the letter $i$ coincides with the dominant letter of $\uu$.  
This property enables us to assign to any standard Arnoux--Rauzy word  a sequence  $(i_n)_{n\geq 0}$  of indices and a sequence $\bigl(\uu^{(n)}\bigr)_{n\geq 0}$ of standard Arnoux--Rauzy words such that  
\begin{equation} \label{eq:directive_sequence} 
\uu=\uu^{(0)}  \  \text{ and }  \ \uu^{(n)} = \varphi_{i_n}\bigl(\uu^{(n+1)}\bigr) \ \text{ for each } n \in \mathbb{N}. 
\end{equation}
The sequence  $(i_n)_{n\geq 0}$ is called the \textit{directive sequence} of $\uu$.  
 
\medskip

For any standard Arnoux--Rauzy word $\uu$, both sequences $(i_n)_{n\geq 0}$ and $\bigl(\uu^{(n)}\bigr)_{n\geq 0}$ are uniquely given.
Moreover, every letter $i \in \mathcal{A}$ occurs in $(i_n)_{n\geq 0}$ infinitely many times.
On the other hand, a sequence $(i_n)_{n\geq 0}$ which contains each letter of $\mathcal{A}$ infinitely many times determines a unique Arnoux--Rauzy word and thus the unique sequence $\bigl(\uu^{(n)}\bigr)_{n\geq 0}$, cf. \cite{RiZa00}.   
  
\begin{example} \label{exam:d_bonacci}
The most famous Sturmian word is the \emph{Fibonacci word} which is the fixed point of so-called \emph{Fibonacci morphism} defined as $\tau: 0 \to 01$, $1 \to 0$.
Analogously, for every integer $d \geq 2$ we define the \emph{$d$-bonacci word} $\tt$ as the fixed point of the \emph{$d$-bonacci morphism} 
\begin{equation*} \label{eq:d_bonacci}
\tau  \ : \  
\left\{ 
\begin{array}{ccl}
a &\to& 0(a+1) \quad \text{ for } a = 0, \ldots, d-2\,,\\   
(d-1)& \to &0\,. 
\end{array}    
\right. 
\end{equation*}
It is a $d$-ary standard Arnoux--Rauzy word. 
By simple computations we get $\tau^d = \varphi_0\varphi_1\cdots \varphi_{d-1}$ and so its directive sequence $(i_n)_{n \geq 0}$ is $(0\, 1\, 2 \cdots d-1)^\omega$, i.e., its $n^{th}$ element $i_n \in \mathcal{A}$ satisfies $i_n \equiv n \mod d$ for any $n \in \N$.
Over a ternary alphabet the word and the corresponding morphism is usually called \emph{Tribonacci} word and morphism, respectively.
\end{example}

\section{Special factors and non-repetitive complexity} \label{sec:special_factors_and_nonrepetitive_complexity}
 
First we show the role that special factors play in the evaluation of non-repetitive complexity. Let us recall that  for a given infinite word $\uu$ we denoted by $f_n(i)$ the factor of length $n$  occurring in $\uu$ at the position $i$. 
\begin{lemma} \label{lem:nonrepetitive_complexity} 
Let $\uu = u_0u_1u_2\cdots$ be a recurrent aperiodic infinite word, $n \in \mathbb{N}$ and $m=nr\mathcal{C}_\uu(n)$. 
Then there exists $h \in \mathbb{N}$ such that 
\begin{itemize}
\item the set $L=\{f_n(h), f_n(h+1), \ldots, f_n(h+m-1)\} $  contains $m$ distinct factors  of $ \mathcal{L}_\uu(n)$;
\item the factor $f_n(h-1)$ is right special and belongs to $L$;
\item the factor $f_n(h+m)$ is left special and belongs to $L$. 
\end{itemize}
\end{lemma}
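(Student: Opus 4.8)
The plan is to produce a single position $h$ realizing $m=\nrc_\uu(n)$ at which the ``boundary factors'' $f_n(h-1)$ and $f_n(h+m)$ are forced to be special, the only obstruction being a forced periodicity that the aperiodicity of $\uu$ excludes. Since $\uu$ is recurrent, every factor of $\uu$ occurs infinitely often; hence the factor $u_k\cdots u_{k+m+n-2}$ that witnesses $m=\nrc_\uu(n)$ reoccurs at some position $\ge1$, and after replacing $k$ by that position we may assume $k\ge1$ and that $f_n(k),\dots,f_n(k+m-1)$ are pairwise distinct. Call a position $p\ge1$ \emph{admissible} if $f_n(p),\dots,f_n(p+m-1)$ are pairwise distinct, and set $L_p=\{f_n(p),\dots,f_n(p+m-1)\}$; by the above, admissible positions exist.

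Now I analyse the two ends of an admissible window $h$. If $f_n(h-1)$ differed from each of $f_n(h),\dots,f_n(h+m-1)$, then $f_n(h-1),f_n(h),\dots,f_n(h+m-1)$ would be $m+1$ pairwise distinct consecutive factors of length $n$, contradicting $m=\nrc_\uu(n)$; hence $f_n(h-1)\in L_h$, and symmetrically $f_n(h+m)\in L_h$. So $f_n(h-1)=f_n(j_L)$ and $f_n(h+m)=f_n(j_R)$ for unique $j_L,j_R\in\{h,\dots,h+m-1\}$. Compare the letters $u_{h+n-1}$ and $u_{j_L+n}$ following the two occurrences of $f_n(h-1)$: if they differ, $f_n(h-1)$ is right special; if they are equal, then $f_{n+1}(h-1)=f_{n+1}(j_L)$, whence $f_n(h)=f_n(j_L+1)$, which together with the distinctness inside $L_h$ forces $j_L+1=h+m$, i.e.\ $j_L=h+m-1$ and $f_{n+1}(h-1)=f_{n+1}(h+m-1)$. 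A symmetric comparison of the letters preceding the two occurrences of $f_n(h+m)$ gives: either $f_n(h+m)$ is left special, or $j_R=h$ and $f_{n+1}(h-1)=f_{n+1}(h+m-1)$. The key observation to verify is that both exceptional cases reduce to the \emph{same} condition $f_{n+1}(h-1)=f_{n+1}(h+m-1)$ (equivalently $u_{h-1}=u_{h+m-1}$ and $f_n(h)=f_n(h+m)$). Hence: \emph{at every admissible $h$ with $f_{n+1}(h-1)\ne f_{n+1}(h+m-1)$ all three bullets of the lemma hold}, since the two special factors also belong to $L_h$ by the opening of this paragraph.

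It remains to exhibit an admissible $h$ with $f_{n+1}(h-1)\ne f_{n+1}(h+m-1)$. Assume on the contrary that $f_{n+1}(p-1)=f_{n+1}(p+m-1)$ for every admissible $p$. Starting from the admissible $k\ge1$, this equality gives $f_n(k)=f_n(k+m)$; since $f_n(k),\dots,f_n(k+m-1)$ are distinct, $L_{k+1}=L_k$ still has $m$ elements, so $k+1$ is admissible. Iterating, $k+t$ is admissible for all $t\ge0$ and $f_{n+1}(k+t-1)=f_{n+1}(k+t+m-1)$, i.e.\ $u_j=u_{j+m}$ for $j\in\{k+t-1,\dots,k+t+n-1\}$; taking the union over $t\ge0$ yields $u_j=u_{j+m}$ for all $j\ge k-1$. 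Then $\uu$ is eventually periodic, contradicting aperiodicity. Therefore some admissible $h$ satisfies $f_{n+1}(h-1)\ne f_{n+1}(h+m-1)$, and this $h$ is as required.

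The step I expect to be the main obstacle is the bookkeeping in the middle paragraph: checking carefully that the two a priori different exceptional cases — the failure of right-speciality at $h-1$ and the failure of left-speciality at $h+m$ — collapse to the single condition $f_{n+1}(h-1)=f_{n+1}(h+m-1)$, and that this condition together with admissibility of $h$ passes from $h$ to $h+1$. Granting these, the lemma follows from the maximality of $m$ and a one-line periodicity argument.
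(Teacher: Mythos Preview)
Your proof is correct and follows essentially the same approach as the paper's: use maximality of $m$ to force the boundary factors $f_n(h-1)$ and $f_n(h+m)$ to reappear inside the window, and use aperiodicity to rule out the degenerate situation where the window merely cycles. The only difference is organizational: the paper splits into Case~I ($j>k$ and $i<k+m-1$, handled directly) versus Case~II ($j=k$ or $i=k+m-1$, handled by sliding the window until the first mismatch), whereas you collapse both failure modes into the single obstruction $f_{n+1}(h-1)=f_{n+1}(h+m-1)$ and dispose of it by a global contradiction with aperiodicity --- a slightly cleaner packaging of the same idea.
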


\begin{proof} 
Let $k$ be an integer such that the factors from $L'=\{f_n(k), f_n(k+1), \ldots, f_n(k+m-1)\} $ are pairwise distinct.
As $\uu$ is recurrent, we can assume $k\geq 1$.
Since $m$ is the maximal number of distinct consecutive factors, there exist integers $i$ and $j$ such that 
$$
k\leq i, j\leq k+m-1, \quad f_n(k-1) = f_n(i) \quad \text{ and }  \quad f_n(k+m) = f_n(j)\,.
$$ 

We discuss  two cases.   

\noindent {\bf  Case I:} Assume $k<j$  and  $i< k+m-1$.
 As $f_n(k+m) = f_n(j)$, the factors $f_{n+1}(k+m-1)$ and  $f_{n+1}(j-1)$ of length $n+1$ have a common suffix of length $n$. 
It follows that $u_{k+m-1}\neq  u_{j-1}$.  
Otherwise $f_n(k+m-1)$ and $f_n(j-1)$ would coincide, which is a contradiction with our choice of $k$.  
It means that $ u_{k+m-1}f_n(j)$ and $ u_{j-1}f_n(j)$ both belong to the language $\mathcal{L}_\uu$. Thus $f_n(k+m) = f_n(j)$ is a left special factor. 
Analogously one can show that $f_n(k-1)$ is a right special factor.
Thus we can choose $h=k$ and $L=L'$. 

\medskip

\noindent {\bf  Case II:} Assume $k=j$  or  $i=k+m-1$.
Without loss of generality we may assume $k=j$, i.e., $f_n(k) = f_n(k+m)$.  
Aperiodicity of $\uu$ guarantees that there exists $\ell \in \mathbb{N}$ such that 
\begin{equation*} 
f_n(k+q ) = f_n(k+m+q)\  \text{ for each } q=0,1,\ldots,\ell \quad  \text{ and }  \quad  f_n(k+\ell+1 ) \neq  f_n(k+m+\ell+1). 
\end{equation*}
Therefore $\{ f_n(k+\ell+1), f_n(k+\ell+2), \ldots, f_n(k+\ell+m) \}=L'$. 
We set $h=k+\ell+1$ and we show that the factor $f_n(h-1)= f_n(k+\ell)$ is right special and the factor $f_n(h+m) = f_n(k+\ell+m+1)$ is left special.

Since $f_n(k+\ell)=f_n(k+\ell +m)$  and $f_n(k+\ell+1)\ne f_n(k+\ell +m+1)$, the letters $u_{k+\ell+n}$ and $u_{k+\ell+m+n}$ differ. Hence, the factor $ f_n(k+\ell)$ is right special.
Since $m$ is the maximal number of distinct consecutive factors, $f_n(k+\ell +m+1) \in L'$.
By definition of $\ell$, $f_n(k+\ell +m+1)\ne  f_n(k+\ell+1)$, and so $f_n(k+\ell +m+1) = f_n(k+\ell + p)$ for some $1 < p \leq m$.
We conclude that $ f_n(k+\ell +m+1)$ is left special using the same arguments as in Case I.
\end{proof}

\begin{theorem} \label{thm:nonrepetitive_complexity_sturmian} 
Let $\uu$ be a Sturmian word.  
Then $nr\mathcal{C}_\uu(n) =n+1$ for every $n \in \mathbb{N}$. 
\end{theorem}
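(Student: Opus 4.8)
The plan is to match the obvious upper bound with a construction. Since $\uu$ is Sturmian we have $\mathcal{C}_\uu(n)=n+1$, so the chain of inequalities recalled in the Introduction gives $nr\mathcal{C}_\uu(n)\le n+1$. For the converse I will produce, for every $n\ge 1$, a factor $w\in\mathcal{L}_\uu$ of length $2n$ all of whose $n+1$ factors of length $n$ are pairwise distinct; since $\uu$ has exactly $n+1$ factors of length $n$, this means $w$ contains every one of them, and, as $w$ occurs somewhere in $\uu$, it forces $nr\mathcal{C}_\uu(n)\ge n+1$. (The case $n=0$ is immediate.)

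Here is how I would build $w$. The Rauzy graph $\Gamma_\uu(n)$ has $n+1$ vertices and $n+2$ edges, is strongly connected ($\uu$ is uniformly recurrent), and has a single right special vertex $R_n$, of out-degree $2$, and a single left special vertex $L_n$, of in-degree $2$, every other vertex having in- and out-degree $1$. From this degree sequence one reads off the shape of $\Gamma_\uu(n)$: when the factor of length $n$ is bispecial it is a ``figure eight'' at $b:=R_n=L_n$, i.e.\ two cycles meeting only at $b$; otherwise it is a ``theta graph'', consisting of a forced path $P_0$ from $L_n$ to $R_n$ together with two internally disjoint forced paths $P_1,P_2$ from $R_n$ to $L_n$. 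In either case $\uu$ has exactly two return words $r_1\ne r_2$ to the factor $R_n$, and the closed walk at $R_n$ spelled by a return word is simple (it revisits no vertex except its extremities): for the figure eight these two walks are the two cycles, for the theta graph they are $P_1P_0$ and $P_2P_0$. Since $\uu$ is aperiodic, its derived word with respect to $R_n$ is an aperiodic word over the two letters $r_1,r_2$, hence contains a factor $rr'$ with $\{r,r'\}=\{r_1,r_2\}$; consequently $v:=r\,r'\,R_n$ is a factor of $\uu$, and read as a walk in $\Gamma_\uu(n)$ starting at $R_n$ it is exactly the walk of $r$ followed by the walk of $r'$, so $R_n$ appears on it at the walk-positions $0$, $|r|$ and $|r|+|r'|$.

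I then take $w$ to be the length-$2n$ infix $v_1v_2\cdots v_{2n}$ of $v$ — that is, delete the first letter of $v$ and keep the next $2n$ letters (note $|v|=|r|+|r'|+n\ge 2n+2$) — so that the factors of $w$ of length $n$ are precisely the vertices occupied by the walk of $v$ at positions $1,2,\ldots,n+1$. It remains to check that these $n+1$ vertices are pairwise distinct. For this I would observe that positions $1,\dots,|r|$ are the internal vertices of the first return walk followed by $R_n$, hence all distinct (the walk is simple and meets $R_n$ only at its ends), and that positions $|r|+1,\dots,n+1$ are the first $n+1-|r|$ internal vertices of the second return walk; using the explicit description above, these last vertices are exactly the internal vertices of one of the two arcs ($C_2$, resp.\ $P_2$) of $\Gamma_\uu(n)$, and as such they are distinct from each other, differ from $R_n$, and are disjoint from the internal vertices of the first return walk (the two cycles, resp.\ the three paths, being internally disjoint, and $L_n$ not being internal to $P_2$). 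Hence all $n+1$ vertices at positions $1,\dots,n+1$ are distinct, so $w$ contains every factor of length $n$, and $nr\mathcal{C}_\uu(n)=n+1$.

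The only real work is this bookkeeping step: pinning down the precise shape of $\Gamma_\uu(n)$ in the two cases, and then verifying that deleting just one letter from $v$ already destroys every repetition among consecutive factors of length $n$. Everything else — the structure of $\Gamma_\uu(n)$ for Sturmian words, the existence of exactly two return words to any factor, the uniform recurrence, and the aperiodicity of the derived word — is standard and partly recalled in the excerpt.
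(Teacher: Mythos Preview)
Your proof is correct. Both you and the paper rely on the same structural description of the Rauzy graph $\Gamma_\uu(n)$ (two cycles sharing the $\alpha$--$\beta$ path, equivalently your figure-eight / theta picture), but you organize the argument differently. The paper first proves a general lemma (Lemma~\ref{lem:nonrepetitive_complexity}) showing that any window realising $nr\mathcal{C}_\uu(n)$ is preceded by the right special factor and followed by the left special factor; plugging this into the Sturmian Rauzy graph forces the corresponding simple path to be Hamiltonian, giving $nr\mathcal{C}_\uu(n)=n+1$ in one stroke. You instead split off the upper bound $nr\mathcal{C}_\uu(n)\le \mathcal{C}_\uu(n)=n+1$ and then explicitly construct a witness for the lower bound from two consecutive distinct return words to $R_n$, checking by hand that the resulting walk in $\Gamma_\uu(n)$ hits $n+1$ distinct vertices. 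Your route is a bit more elementary and self-contained (it needs neither Lemma~\ref{lem:nonrepetitive_complexity} nor even full aperiodicity of the derived word---just that both return words occur, which already forces some $r r'$ with $r\neq r'$), while the paper's route has the payoff that Lemma~\ref{lem:nonrepetitive_complexity} is reused verbatim in the $d$-ary Arnoux--Rauzy case (Proposition~\ref{prop:arnoux_rauzy_nonrepetitive_complexity}). One small remark on your bookkeeping: the identity $n+1-|r|=|P_2|-1$ in the theta case is exactly what makes your window stop \emph{before} the second visit to $L_n$; it is worth stating this count explicitly, since at first glance one might worry that the second return walk re-enters $P_0$ within the window.
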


\begin{proof}  
Let $n\in\N$.
Any Sturmian word $\uu$ has exactly one left and one right special factor of length $n$. 
Let us denote them $\alpha$ and $\beta$, respectively.
Therefore, in the Rauzy graph $\Gamma_\uu(n)$ the vertex $\alpha$ has indegree $2$ and all other vertices have indegree $1$ and the vertex $\beta$ has outdegree $2$ and all others vertices have outdegree $1$. 
Thus $\Gamma_\uu(n)$ is a union of two cycles $C_0$ and $C_1$ which have a common part, namely the path from $\alpha$ to $\beta$ (see Figure \ref{fig:sturmian_rauzy_graph}). 
Denote by $\gamma, \delta,\zeta,\eta$ the vertices such that $(\beta, \gamma), (\beta,\delta), (\zeta,\alpha)$ and $(\eta,\alpha)$  are edges in $\Gamma_\uu(n)$.

\begin{figure}[h]
\begin{center}
\includegraphics[scale=0.8]{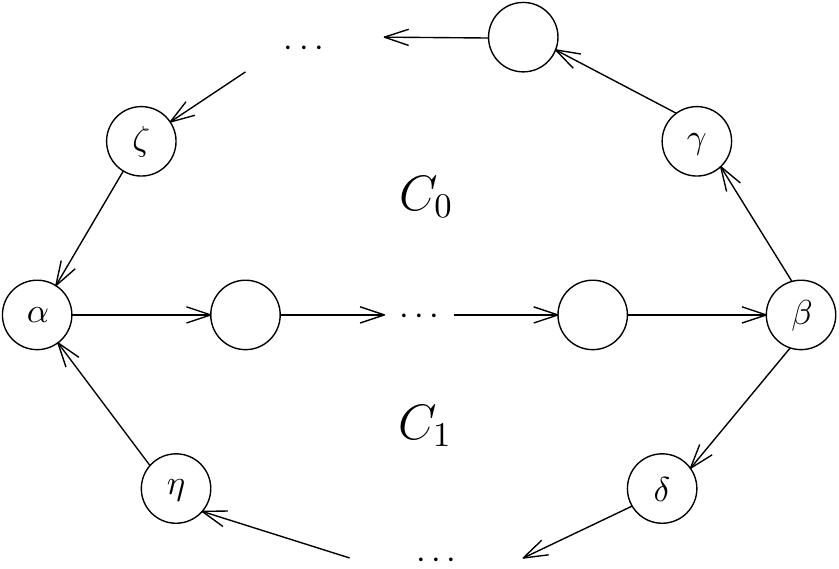}
\end{center}
\caption{The Rauzy graph $\Gamma_\uu(n)$ of the Sturmian word $\uu$.} \label{fig:sturmian_rauzy_graph}
\end{figure}

By Lemma \ref{lem:nonrepetitive_complexity}, let $h, m\in\N$ be such that  $m=\nrc_\uu(n)$, $L=\{f_n(h), f_n(h+1), \ldots, f_n(h+m-1)\}$, $\#L=\nrc_\uu(n)$, $\alpha=f_n(h+m)\in L$ and $\beta=f(h-1) \in L$. 
Hence in the Rauzy graph $\Gamma_\uu(n)$, there exists a path starting in a vertex of $N_+(\beta)$, passing through $\alpha$ and $\beta$, and ending in a vertex of $N_-(\alpha)$. Moreover, this path cannot pass twice through the same vertex. 
The only possible paths are the 
$$\gamma \to \cdots \to \alpha \to \cdots \to \beta \to \delta\to\cdots\to \eta \text{ and }
\delta \to \cdots \to \alpha \to \cdots \to \beta \to \gamma\to\cdots\to \zeta.$$
Both paths are hamiltonian, i.e., they are passing through all vertices of $\Gamma_\uu(n)$ exactly once.
It follows that $\nrc_\uu(n)=\mathcal{C}_\uu(n)=n+1$.
\end{proof}

The previous theorem  states  that the  factor complexity and the  non-repetitive complexity coincide for Sturmian words.  
In the next section we prove that this property is not preserved in $d$-ary Arnoux--Rauzy words with $d\geq 3$.   
Nevertheless, the equality $nr\mathcal{C}_\uu=\mathcal{C}_\uu$ we observed in binary aperiodic words with the smallest factor complexity can take place also in a word with the maximal factor complexity, as shown in \cite{NiRa16}.
The next corollary of Lemma \ref{lem:nonrepetitive_complexity} illustrates that the equality $nr\mathcal{C}_\uu=\mathcal{C}_\uu$ forces the Rauzy graphs of a word $\uu$ to have a very special form. 

\begin{corollary} \label{coro:property_of_rauzy_graph}
Let $\uu$ be a recurrent aperiodic word, $n \in \mathbb{N}$, $w \in  \mathcal{L}_\uu(n)$ and $m=nr\mathcal{C}_\uu(n)$.
Let $h\in\N$ be such that $f_n(h-1)$ and $f_n(h+m)$ are respectively the right and left  special factors from Lemma \ref{lem:nonrepetitive_complexity}. Assume $\nrc_\uu(n)=\mathcal{C}_\uu(n)$.
\begin{enumerate}
\item If $w \neq f_n(h-1)$, then $N_+(w)$ contains at least $\#N_+(w)-1$ left special factors. 
\item If $w \neq f_n(h+m)$, then $N_-(w)$ contains at least $\#N_-(w)-1$ right special factors. 
\end{enumerate}
\end{corollary}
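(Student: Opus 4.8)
The plan is to exploit the Hamiltonian-path structure of the Rauzy graph that the hypothesis $nr\mathcal{C}_\uu(n)=\mathcal{C}_\uu(n)$ forces. By Lemma~\ref{lem:nonrepetitive_complexity}, with $m=nr\mathcal{C}_\uu(n)$ there is $h$ such that the factors $f_n(h),f_n(h+1),\ldots,f_n(h+m-1)$ are pairwise distinct; since $m=\#\mathcal{L}_\uu(n)$, these are \emph{all} vertices of $\Gamma_\uu(n)$. Reading consecutive letters of $\uu$, the sequence $f_n(h)\to f_n(h+1)\to\cdots\to f_n(h+m-1)$ is therefore a Hamiltonian path $P$ in $\Gamma_\uu(n)$; moreover it extends on the left by the edge $f_n(h-1)\to f_n(h)$ (with $f_n(h-1)$ the right special factor, already in the path) and on the right by $f_n(h+m-1)\to f_n(h+m)$ (with $f_n(h+m)$ the left special factor, already in the path). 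So $\Gamma_\uu(n)$ carries a Hamiltonian path whose initial vertex is $f_n(h)=$ (some successor of the right special $f_n(h-1)$) and whose final vertex is $f_n(h+m-1)=$ (some predecessor of the left special $f_n(h+m)$).

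Next I would argue locally at an arbitrary vertex $w$. Every occurrence of $w$ in the stretch $u_h u_{h+1}\cdots u_{h+m+n-2}$ uses $w$ exactly once (the factors are distinct), so each out-edge of $w$ is traversed by $P$ at most once, and since $P$ is Hamiltonian every out-edge of $w$ \emph{except possibly one} is actually traversed: indeed $P$ leaves $w$ exactly once unless $w$ is the last vertex $f_n(h+m-1)$, and when it does leave, it uses one out-edge; the remaining $\#N_+(w)-1$ out-edges of $w$ are not used by $P$. Here is the crucial observation: if an out-edge $w\to v$ is \emph{not} used by $P$, then $P$ must enter $v$ via a different edge, so $v$ has in-degree at least $2$ in $\Gamma_\uu(n)$, i.e.\ $v$ is left special. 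Applying this to $w\neq f_n(h-1)$ (so that the ``one possibly-untraversed out-edge'' exception for the right special factor does not spoil the count — when $w$ is the right special factor $f_n(h-1)$, $P$ still leaves it, but $f_n(h-1)\to f_n(h)$ is the initial edge of $P$ and there is an additional subtlety about whether $P$ re-enters it) yields: at least $\#N_+(w)-1$ elements of $N_+(w)$ are left special. This is statement~(1). Statement~(2) is the exact mirror image, obtained by reversing all edges — equivalently by noting $\mathcal{L}_\uu$ is not needed to be reversal-closed here, one just runs the same argument on in-edges of $w\neq f_n(h+m)$, using that an unused in-edge $v\to w$ forces $P$ to leave $v$ by another edge, hence $v$ has out-degree $\geq 2$, i.e.\ $v$ is right special.

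The main subtlety — and the step I would be most careful about — is bookkeeping the two endpoint vertices $f_n(h-1)$ and $f_n(h+m)$ and making sure the ``$-1$'' in each count is not being spent twice. Concretely: a Hamiltonian path leaves every vertex exactly once except its terminal vertex, and enters every vertex exactly once except its initial vertex. The terminal vertex of $P$ is $f_n(h+m-1)$ and the initial vertex is $f_n(h)$; neither of these is necessarily the special factor named in the corollary. For a vertex $w$ that is neither special nor an endpoint of $P$, $P$ uses exactly one out-edge of $w$, leaving $\#N_+(w)-1$ out-edges unused, each landing in a left special vertex — clean. The cases to check by hand are $w=f_n(h)$ (initial vertex, possibly also where the left-extension edge $f_n(h-1)\to f_n(h)$ lands) and $w=f_n(h+m-1)$ (terminal vertex), verifying that even there $P$ uses all but at most one out-edge of $w$; the hypothesis $w\neq f_n(h-1)$ in part~(1) is exactly what is needed to close the gap when $w$ would otherwise be both the right special factor and carrying the unused-out-edge slack. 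I expect this endpoint analysis, rather than any deep combinatorics, to be the only real obstacle; once it is dispatched the left-special / right-special conclusions fall out immediately from the in-degree/out-degree dichotomy characterizing special factors.
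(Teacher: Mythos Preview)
Your Hamiltonian-path reformulation is essentially the paper's argument recast in Rauzy-graph language: the paper writes $w = as$, locates each out-neighbour $sb_k$ at some position $h+j_k$ in $L$, and shows that the preceding letter $u_{h+j_k-1}$ differs from $a$ (so $sb_k$ is left special) except possibly when $j_k = p+1$, i.e.\ when $sb_k$ is the vertex immediately following $w$ along the path. This is exactly your ``$P$ leaves $w$ via one out-edge; every other out-neighbour is entered by $P$ from elsewhere.''

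However, you have mis-located the delicate case. The issue is not when $w$ is an endpoint of $P$; it is when an \emph{out-neighbour} $v$ of $w$ coincides with the initial vertex $f_n(h)$. In that situation $P$ does not enter $v$ at all, so your ``crucial observation'' (that $P$ must enter $v$ via a different edge) fails as stated. The repair is precisely the hypothesis $w \neq f_n(h-1)$: the edge $f_n(h-1) \to f_n(h)$ lies in $\Gamma_\uu(n)$, and since $w \neq f_n(h-1)$ this furnishes $v = f_n(h)$ with a second in-neighbour distinct from $w$, hence $v$ is left special after all. In the paper's language this is the case $j_k = 0$, dispatched by the line ``if $h = h+j_k$, then $u_{h+j_k-1}\neq a$ as $w = as \neq f_n(h-1)$''. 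Your parenthetical that ``$f_n(h-1)\to f_n(h)$ is the initial edge of $P$'' is also off: $P$ begins at $f_n(h)$, so this edge is not in $P$ --- it is an extra edge of $\Gamma_\uu(n)$, and that is exactly why it supplies the required second in-edge. By contrast, the cases you flag ($w = f_n(h)$ or $w = f_n(h+m-1)$) cause no trouble: $P$ still leaves $f_n(h)$ exactly once, and when $w = f_n(h+m-1)$ the argument in fact shows all of $N_+(w)$ is left special. Once you redirect the endpoint analysis from $w$ to the out-neighbour $v = f_n(h)$, your proof is complete and coincides with the paper's.
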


\begin{proof}
If the factor $w$ is not right special, then the set $ N_+(w)$  consists of one element and the statement is trivial. 
Let $w \neq f_n(h-1)$  be a right special factor.
We write it in the form $w=as$, where $a \in \mathcal{A}$ and $s \in \mathcal{L}_\uu(n-1)$.  
We denote $q=\#N_+(w)$ and find distinct letters $b_1, b_2, \ldots, b_q$ such that $N_+(w) = \{sb_1,sb_2, \ldots, sb_q\}$.
Obviously, $asb_k \in \mathcal{L}_\uu(n+1) $ for each $k=1,2,\ldots,q$.  
The assumption $nr\mathcal{C}_\uu(n) =\mathcal{C}_\uu(n) $ implies that $sb_k$ occurs in the set $L$ described in Lemma \ref{lem:nonrepetitive_complexity}. 
It means that $f_n(h+j_k) = sb_k $ for some index $j_k$, $0\leq j_k \leq m-1$. Moreover, there exists an index $p$, $0\leq p\leq m-1$ such that $f_n(h+p) = w=as$.   
 
Let us look at the letter which precedes $f_n(h+j_k) = sb_k$, i.e., at the letter $u_{h+j_k-1}$: 
\begin{itemize}
\item[--] if $h= h+ j_k$, then $u_{h+j_k-1}\neq a$ as $w=as \neq f_n(h-1)$;
\item[--] if $h\neq h+j_k \neq h+p+1$, then $u_{h+j_k-1}\neq a$, otherwise the factor $as = f_n(h+p) = f_n(h+j_k-1)$ occurs twice in $L$, which is a contradiction.
\end{itemize}
We showed that for all $k =1,2,\ldots, q$  (up to one possible exception  when  $h+j_k =  h+p+1$), the factors $asb_k$ and $u_{h+j_k-1} sb_k$ belong to the language of $\uu$ and $u_{h+j_k-1} \neq a$.
It means that $sb_k$ is a left special factor. 
 
The proof of the second part of the statement is analogous. 
\end{proof} 

\section{Non-repetitive complexity of Arnoux--Rauzy words} \label{sec:nonrepetitive_complexity_in_arnoux_rauzy_words}
 
For every Arnoux--Rauzy word $\uu$, there exists at most one bispecial factor of $\uu$ of length $n$.  
Thus we can order the bispecial factors by their lengths:
for $k \in \N$ we denote $B_\uu(k)$ the $k^{th}$ bispecial factor of $\uu$.
In particular, $B_\uu(0) = \varepsilon$, $B_\uu(1) = u_0$ (the first letter of $\uu$), etc. 

Now we can formulate the link between the lengths of the return words to the bispecial factors and the values of non-repetitive complexity. 
Let us recall that any factor of a $d$-ary Arnoux--Rauzy word $\uu$ has exactly $d$ return words, cf. \cite{JuVu}.

\begin{proposition} \label{prop:arnoux_rauzy_nonrepetitive_complexity}  
Let $\uu$ be a $d$-ary Arnoux--Rauzy word and let $n, k \in \mathbb{N}$ be such that $B_\uu(k-1) < n \leq B_\uu(k)$.
Denote by $r_0,r_1,\ldots, r_{d-1}$ the return words to  $B_\uu(k)$ in $\uu$.
\begin{enumerate}
\item If $n =|B_\uu(k)|$, then 
$$
nr\mathcal{C}_\uu(n) = \max\{|r_ir_j|\,  :\, r_ir_j \in \mathcal{L}_\uu, \ 0\leq i,j \leq d-1, i \neq j\} -1\,.
$$
\item If $|B_\uu(k-1)| < n < | B_\uu(k)|$, then
$$
nr\mathcal{C}_\uu(n) =  nr\mathcal{C}_\uu\bigl(|B_\uu(k)|\bigr) - |B_\uu(k)| + n\,.
$$
\end{enumerate}
\end{proposition}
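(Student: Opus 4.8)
The plan is to work inside the Rauzy graphs $\Gamma_\uu(m)$ and to reduce the computation of $nr\mathcal{C}_\uu(n)$ to the combinatorics of return words to the bispecial factor $B_\uu(k)$, using Lemma~\ref{lem:nonrepetitive_complexity} as the starting point. First I would treat part (1), i.e.\ the case $n=|B_\uu(k)|$. By Lemma~\ref{lem:nonrepetitive_complexity} there is a position $h$ such that $L=\{f_n(h),\dots,f_n(h+m-1)\}$ consists of $m=nr\mathcal{C}_\uu(n)$ distinct factors, $f_n(h-1)$ is right special, $f_n(h+m)$ is left special, and both lie in $L$. For an Arnoux--Rauzy word of length $n=|B_\uu(k)|$ there is a \emph{unique} right special factor $\beta$ and a \emph{unique} left special factor $\alpha$ of that length, and since $B_\uu(k)$ is bispecial and a palindrome we have in fact $\alpha=\beta=B_\uu(k)$ (the unique right special factor of length $|B_\uu(k)|$ is $B_\uu(k)$ itself, likewise for left special). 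Hence $f_n(h-1)=f_n(h+m)=B_\uu(k)$, which means that the block $u_h u_{h+1}\cdots u_{h+m+n-2}$ reads as $B_\uu(k)$ immediately before position $h$ and $B_\uu(k)$ starting at position $h+m$; equivalently, between two consecutive occurrences of $B_\uu(k)$ the word has no repeated factor of length $n$, and this occurrence gap is exactly a product of return words. I would then argue that the maximal such gap with all length-$n$ factors distinct is realized precisely by a gap consisting of exactly two consecutive return words $r_i r_j$ with $i\neq j$ (two \emph{equal} consecutive return words would force a repeated factor of length $n=|B_\uu(k)|$, namely the factor starting at the first return word's initial position repeats; and three or more return words, even distinct, always create a repeated factor — this is where the bispecial/palindrome structure and the fact that each factor has exactly $d$ return words must be used carefully). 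The number $m$ is then $|r_ir_j|-1$ (the occurrence gap $u_h\cdots u_{h+m-1}$ has length $m$, and the two occurrences of $B_\uu(k)$ bracketing it are at distance $|r_ir_j|$, so $m=|r_ir_j|-1$), maximized over admissible pairs, giving the stated formula.

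For part (2), with $|B_\uu(k-1)|<n<|B_\uu(k)|$, the key observation is that in this length range the left and right special factors behave rigidly: each right special factor of length $n$ extends uniquely to the left all the way down to the previous bispecial factor, and each left special factor of length $n$ extends uniquely to the right up to the next bispecial factor. Concretely, the right special factor of length $n$ is the length-$n$ suffix of $B_\uu(k)$, and the left special factor of length $n$ is the length-$n$ prefix of $B_\uu(k)$; both have a unique one-sided extension by one more letter (since there is no new bispecial factor strictly between these lengths). Therefore I would show that the optimal window for length $n$ is obtained from the optimal window for length $|B_\uu(k)|$ by shifting: if the length-$|B_\uu(k)|$ window runs over positions $h',\dots,h'+m'-1$ with $m'=nr\mathcal{C}_\uu(|B_\uu(k)|)$, then for length $n$ one takes essentially the same block of the infinite word, but now a window of length $n$ slides inside it, and the number of distinct length-$n$ factors one can fit before a repeat is $m'+(|B_\uu(k)|-n)$. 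The inequality $nr\mathcal{C}_\uu(n)\ge nr\mathcal{C}_\uu(|B_\uu(k)|)-|B_\uu(k)|+n$ comes from exhibiting this window explicitly (the same long block still has no repeated length-$n$ factor, and it now contains $|B_\uu(k)|-n$ more length-$n$ subwindows than length-$|B_\uu(k)|$ subwindows); the reverse inequality comes from Lemma~\ref{lem:nonrepetitive_complexity} together with the rigidity statement, which forces the extremal window to start at a right special factor and end at a left special factor of length $n$, and these are uniquely extendable, so the window cannot be strictly longer than the shifted optimal window for length $|B_\uu(k)|$.

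The main obstacle I expect is the careful verification in part (1) that a gap of three or more return words always produces a repeated factor of length $n=|B_\uu(k)|$, and dually that the extremal configuration is a gap of exactly two \emph{distinct} return words. This requires understanding how return words to $B_\uu(k)$ overlap: each return word $r_i$ begins with $B_\uu(k)$ removed appropriately (more precisely $r_i B_\uu(k)$ has $B_\uu(k)$ as a prefix), and a concatenation $r_{i_1}r_{i_2}\cdots r_{i_t} B_\uu(k)$ contains intermediate occurrences of $B_\uu(k)$ at the boundaries; I need that if $t\ge 3$, or if $t=2$ with $i_1=i_2$, then some length-$n$ factor occurs twice inside $r_{i_1}\cdots r_{i_t}$. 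For $i_1=i_2$ this is essentially immediate since the length-$n$ prefix of $r_{i_1}r_{i_2}B_\uu(k)$ reappears after $|r_{i_1}|$ positions. For $t\ge 3$ the argument should use that among any three return words two must share a first letter, or more robustly that the factor of length $n$ straddling the second boundary equals some factor appearing elsewhere — here the classification of return words of Arnoux--Rauzy words from \cite{JuVu} (each factor has exactly $d$ return words, realized via the $S$-adic images of the letters) is the tool to invoke, possibly by induction through the directive sequence. The passage between the two parts (matching the explicit windows and handling off-by-one issues in the indices $h$, $m$, $n$) is routine but needs to be written out consistently with the conventions in the Definition and in Lemma~\ref{lem:nonrepetitive_complexity}.
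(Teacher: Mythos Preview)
Your plan for part~(1) is essentially the paper's argument read through return words rather than through the Rauzy graph, but you create an obstacle that is not there. Once Lemma~\ref{lem:nonrepetitive_complexity} gives $f_n(h-1)=f_n(h+m)=B_\uu(k)$ (the unique right and the unique left special factor of this length) and tells you that this factor belongs to $L$, the pairwise distinctness of $L$ forces $B_\uu(k)$ to occur at \emph{exactly one} position among $h,\dots,h+m-1$. Together with the two flanking occurrences at $h-1$ and $h+m$ this already yields exactly two consecutive return words $r_i,r_j$; there is nothing to check about ``three or more return words''. That $i\neq j$ is immediate (otherwise $f_n(h)=f_n(h+|r_i|)$ would repeat inside $L$). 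The paper packages the same reasoning by observing that $\Gamma_\uu(n)$ is a union of $d$ cycles meeting only at $B_\uu(k)$, so the non-repeating path coming from Lemma~\ref{lem:nonrepetitive_complexity} runs around exactly two distinct cycles.

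Your treatment of part~(2), however, contains a real error of sign and of mechanism. The statement says
\[
nr\mathcal{C}_\uu(n)=nr\mathcal{C}_\uu(|B_\uu(k)|)-\bigl(|B_\uu(k)|-n\bigr),
\]
so for $n<|B_\uu(k)|$ the value is \emph{smaller}, not larger. Your claim that ``the same long block still has no repeated length-$n$ factor, and it now contains $|B_\uu(k)|-n$ more length-$n$ subwindows'' would give the opposite inequality and is false: the optimal block for length $|B_\uu(k)|$ does contain repeated factors of length $n$. The paper's Rauzy-graph picture explains why. For every $n$ with $|B_\uu(k-1)|<n\le |B_\uu(k)|$ the graph $\Gamma_\uu(n)$ consists of $d$ cycles of the \emph{same} lengths $\ell_0,\dots,\ell_{d-1}$ (the return-word lengths), sharing a common path of $p=|B_\uu(k)|-n+1$ vertices from the left special vertex $\alpha$ to the right special vertex $\beta$. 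The path produced by Lemma~\ref{lem:nonrepetitive_complexity} runs around two cycles and hence has $\ell_i+\ell_j-p$ vertices; as $n$ decreases, $p$ grows and $nr\mathcal{C}_\uu(n)$ shrinks. In your return-word language: inside the block realising $nr\mathcal{C}_\uu(|B_\uu(k)|)$, the $p$ length-$n$ factors lying on the segment from $\alpha$ to $\beta$ are visited once within $r_i$ and once again within $r_j$, so they repeat. The correct lower bound for $nr\mathcal{C}_\uu(n)$ therefore does not come from reusing the long block, but from the shorter window that skips the second pass over the common segment; the paper gets both inequalities in one stroke from the cycle decomposition.
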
  
  
\begin{proof}
Let $\uu$ be a $d$-ary Arnoux--Rauzy word. 
Its $n^{th}$ Rauzy graph $\Gamma_\uu(n)$ contains exactly one vertex $\alpha$ with the indegree $d$ and all other vertices have indegree $1$. 
It also contains exactly one vertex $\beta$ with the outdegree $d$, all other vertices have outdegree $1$. 
It means that $\Gamma_\uu(n)$ is composed of $d$ cycles $C_0, C_1, \ldots, C_{d-1}$ which only have in common the path from $\alpha$ to $\beta$ (see Figure \ref{fig:arnoux_rauzy_rauzy_graph}). 
For every $i \in \{0, \ldots, d-1\}$, we denote by $\ell_i$ the number of vertices in the cycle $C_i$ and by $\gamma_i, \zeta_i$ the vertices from the cycle $C_i$ such that $(\beta, \gamma_i), (\zeta_i, \alpha)$ are edges in $\Gamma_\uu(n)$. 
Let $p$ be the number of vertices on the minimal path from $\alpha$ to $\beta$.

Let $h\in\N$ be such that $L =\{ f_n(h), f_n(h+1),\ldots, f_n(h+m-1)\}$ is the set from Lemma \ref{lem:nonrepetitive_complexity} with $m= \# L = \nrc_\uu(n)$. 
Then $f_n(h-1) = \beta $, $f_n(h+m)=\alpha$ and $\alpha,\beta\in L$. 
Hence the path in $\Gamma_\uu(n)$ corresponding to $L$ is of the form:
$$\gamma_i\to \cdots\to \zeta_i \to \alpha \to \cdots \to  \beta \to \gamma_j\to \cdots \to \zeta_j$$
for some $i,j\in\mathcal{A}, i \neq j,$ and it contains $\nrc_\uu(n) = \ell_i + \ell_j -p $ vertices. So it suffices to compute the numbers $\ell_i$, $\ell_j$ and $p$. 

\begin{figure}[h]
\begin{center}
\includegraphics[scale=0.8]{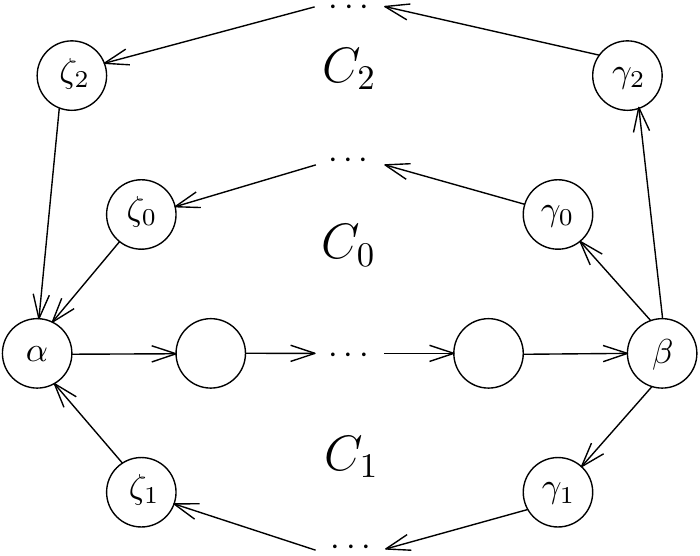}
\end{center}
\caption{The Rauzy graph $\Gamma_\uu(n)$ of a ternary Arnoux--Rauzy word $\uu$.} \label{fig:arnoux_rauzy_rauzy_graph}
\end{figure}

\medskip

$(1)$: 
If $n = |B_\uu(k)|$, then $\alpha = \beta = B_\uu(k)$, $p=1$ and the Rauzy graph $\Gamma_\uu(n)$ contains $d$ cycles $C_0, C_1, \ldots, C_{d-1}$ with only the vertex $B_\uu(k)$ in common.
Clearly, these cycles correspond with the return words to $B_\uu(k)$: if we start in $B_\uu(k)$ and concatenate the first letters of all vertices of $C_i$, we get $r_i$ for all $i \in \mathcal{A}$. 
Thus the number $\ell_i$ of vertices in $C_i$ is equal to $|r_i|$. 
Hence $nr\mathcal{C}_\uu(n) = \ell_i + \ell_j - 1 = |r_{i}| + |r_{j}| - 1$ for some $i \neq j$.
By definition of $\nrc_\uu(n)$, we have to choose $i \neq j$ such that the word $r_{i}r_{j}$ is a factor of $\uu$ and its length is maximal possible.  

\medskip

$(2)$: 
If $|B_\uu(k-1)| < n < |B_\uu(k)|$, then $\alpha \neq \beta$ and $p > 1$. 
Observe that if $p > 1$, then $\Gamma_\uu(n+1)$ has also cycles of lengths $\ell_i$ for all $i \in \mathcal{A}$ and the minimal path from the left special factor to the right special factor contains $p -1$ vertices.
It follows that $\Gamma_\uu(n+p-1)$ contains the bispecial factor $B_\uu(k)$ and so $|B_\uu(k)| = n+p-1$. 
By the Rauzy graph $\Gamma_\uu(|B_\uu(k)|)$ we have $nr\mathcal{C}_\uu(|B_\uu(k)|) = \ell_{i} + \ell_{j} - 1$, as the lengths of the cycles are preserved. So  
$$\nrc_\uu(n) = \ell_i + \ell_j -p = nr\mathcal{C}_\uu(|B_\uu(k)|) +1 - (|B_\uu(k)|-n +1) = nr\mathcal{C}_\uu(|B_\uu(k)|)- |B_\uu(k)| +n\,.$$
\end{proof}

In the introduction we stated the inequality between the   recurrence function $R_{\uu}$  and the non-repetitive complexity.
It is worth mentioning that $R_{\uu}$ is also linked to return words, as stated by Cassaigne in \cite{Ca99}.

\begin{proposition}[\cite{Ca99}] \label{prop:reccurrence_function_by_return_words} 
Let $\uu$ be a recurrent infinite word. 
Then for each $n \in \mathbb{N}$, 
$$
R_{\uu}(n) - n+1 = \max\{ |r|\, : \,  r \ \text{is a return word to } w \in \mathcal{L}_\uu(n)\}.  
$$
\end{proposition}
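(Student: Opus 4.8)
The plan is to prove the equality $R_\uu(n)-n+1 = \max\{|r| : r \text{ is a return word to some } w\in\mathcal L_\uu(n)\}$ by establishing the two inequalities separately. Throughout, fix $n\in\mathbb N$ and write $M(n)$ for the right-hand maximum; since $\uu$ is recurrent, every factor of length $n$ has at least two occurrences, so return words exist and $M(n)$ is well defined (it is finite because any block of $\uu$ longer than $R_\uu(n)$ already contains every factor of length $n$, hence in particular two occurrences of any prescribed $w$).

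First I would prove $R_\uu(n)-n+1 \ge M(n)$, equivalently $R_\uu(n)\ge M(n)+n-1$. Pick a factor $w\in\mathcal L_\uu(n)$ and a return word $r$ to $w$ with $|r|=M(n)$; by definition there is an occurrence $i$ of $w$ in $\uu$ with $r = u_i\cdots u_{j-1}$ and $j$ the next occurrence of $w$ after $i$. Consider the window of length $M(n)+n-2$ starting at position $i+1$, namely $u_{i+1}u_{i+2}\cdots u_{i+M(n)+n-2} = u_{i+1}\cdots u_{j+n-2}$. The only occurrences of $w$ in the infinite word that could lie inside this window start at positions $\ge i+1$; but the next occurrence of $w$ at or after $i+1$ is $j$, and $w$ occupies positions $j,\dots,j+n-1$, so it does not fit inside a window ending at $j+n-2$. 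Hence this window of length $M(n)+n-2$ misses the factor $w$ entirely, so $R_\uu(n) > M(n)+n-2$, i.e. $R_\uu(n)\ge M(n)+n-1$, giving $R_\uu(n)-n+1\ge M(n)$.

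Next I would prove the reverse inequality $R_\uu(n)-n+1\le M(n)$. Suppose for contradiction that some window $W = u_k\cdots u_{k+m-1}$ of length $m = M(n)+n-1$ fails to contain some factor $w\in\mathcal L_\uu(n)$; this would show $R_\uu(n) > m$, and I want to rule it out, which is exactly the claim $R_\uu(n)\le M(n)+n-1$. By uniform recurrence $w$ occurs somewhere; let $i$ be an occurrence of $w$ with $i < k$ (such exists as $w$ occurs infinitely often and at positions before $k$ is not automatic — instead take the last occurrence of $w$ at position $i \le k-1$, which exists since $w$ occurs infinitely often, hence in particular at some position $<k$ once $k$ is large; if $w$ does not occur before $k$, replace $\uu$ by a shifted version or argue with the next occurrence $j\ge k+m$ symmetrically). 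Let $j$ be the first occurrence of $w$ strictly after $i$; since $w$ is absent from $W$ as a factor, and the occurrences of $w$ occupying positions inside $\{k,\dots,k+m-1\}$ would have starting positions in $\{k,\dots,k+m-n\}$, the gap between consecutive occurrences straddling $W$ satisfies $j \ge k+m-n+1$ while $i \le k-1$, so the return word $u_i\cdots u_{j-1}$ has length $j-i \ge (k+m-n+1)-(k-1) = m-n+2 = M(n)+1 > M(n)$, contradicting the definition of $M(n)$.

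The cleanest way to package both directions is to argue directly about the length of the longest block of $\uu$ avoiding a given factor $w\in\mathcal L_\uu(n)$: this length equals $(\text{longest gap between consecutive occurrences of }w) + n - 2$, i.e. $(\max|r| \text{ over return words } r \text{ to } w) + n - 2$, and then take the maximum over $w$ and add $1$, which is precisely $R_\uu(n)$. The main obstacle — and the only point requiring care — is the bookkeeping at the window boundaries: correctly translating ``$w$ does not occur inside the window $u_k\cdots u_{k+m-1}$'' into ``the starting positions of occurrences of $w$ avoid $\{k,\dots,k+m-n\}$,'' and handling the edge case where the relevant occurrence of $w$ sits at the very beginning of $\uu$ (using recurrence, or uniform recurrence, to guarantee an earlier occurrence exists once we are far enough along). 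Since Proposition~\ref{prop:reccurrence_function_by_return_words} is quoted from \cite{Ca99}, I would in fact just cite Cassaigne's paper and sketch this boundary argument in a sentence or two rather than reproving it in full.
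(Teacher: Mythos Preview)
The paper gives no proof of this proposition at all; it merely states it and attributes it to Cassaigne \cite{Ca99}, which is precisely what you propose doing in your final sentence. Your longer sketch is the standard two-inequality argument and is essentially correct, but the edge case you yourself flag---when the factor $w$ has no occurrence before position $k$---is not actually resolved by the fixes you suggest (``shift $\uu$'' is undefined, and the ``symmetric'' argument with the next occurrence $j\ge k+m$ produces no return word since there is no earlier occurrence to pair it with). The clean repair uses recurrence once more: the entire prefix $u_0\cdots u_{k+m-1}$ reappears at some later position $q$, and by then $w$ has already occurred somewhere in $\uu$; the shifted copy $u_{q+k}\cdots u_{q+k+m-1}$ of the $w$-free window is therefore preceded by an occurrence of $w$, and the consecutive occurrences of $w$ straddling it give a return word of length at least $M(n)+1$, the desired contradiction.
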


\medskip

To transform Proposition \ref{prop:arnoux_rauzy_nonrepetitive_complexity} into an explicit formula for $nr\mathcal{C}_\uu$, we have to compute the lengths of the return words to the bispecial factors in $\uu$ and also decide which return words are neighbouring in $\uu$. 
For this purpose we will essentially use the directive sequence $(i_n)_{n \geq 0}$ of a standard Arnoux-Rauzy word $\uu$ introduced in Section \ref{sec:arnoux_rauzy}. 
Let us emphasize that the non-repetitive complexity of $\uu$ depends only on the language $\mathcal{L}_\uu$ and not on the word $\uu$ itself. Since for every Arnoux-Rauzy word $\uu$ there exists a unique standard Arnoux-Rauzy word $\vv$ such that $\mathcal{L}_\uu = \mathcal{L}_\vv$, we can restrict our considerations only to standard Arnoux--Rauzy words. 
Note that if $\uu$ is standard Arnoux--Rauzy word, all its bispecial factors are prefixes of $\uu$.

The following notion of \textit{derived word} which codes the order of the return words in $\uu$ will be also useful.  

\begin{definition}  \label{defi:derived_word}
Let $w$ be a prefix of a uniformly recurrent word $\uu$ and let $r_0,r_1, \ldots,r_{\ell-1}$ be the return words to $w$ in $\uu$. 
If we write $\uu$ as a concatenation $\uu = r_{j_0}r_{j_1}r_{j_2}\cdots$, then the word $ j_0j_1j_2\cdots$  is called the \emph{derived word} to $w$ in $\uu$ and is denoted $\Der_\uu(w)$. 
\end{definition}
 
We do not specify the order of the return words and thus the derived word is determined uniquely up to a permutation of letters.
Clearly, the derived word to the empty word $\varepsilon$ in $\uu$ is the word $\uu$ itself. 
The simple form of the morphisms $\varphi_i$ defined by \eqref{eq:elementary_morphisms} gives immediately the following claim, which can be also deduced from the results in \cite{JuVu} or \cite{Med}.

\begin{claim} \label{claim:derived_word_of_preimage}
Let $\uu$ and ${\bf v}$ be standard $d$-ary  Arnoux--Rauzy words such that ${\bf v} = \varphi_i(\uu)$ with $i \in \mathcal{A}$.
Then for any $k\in \mathbb{N}$ it holds:  
\begin{itemize}  
\item $ B_{\bf v}(k+1) = \varphi_i\bigl(B_\uu(k)\bigr)i $;
\item if $r_0,r_1,\ldots, r_{d-1}$ are  the return words to $B_\uu(k)$ in $\uu$, then $ \varphi_i(r_0),\varphi_i(r_1), \ldots, \varphi_i(r_{d-1})$ are the return words to $ B_{\bf v}(k+1)$ in ${\bf v}$;
\item $\Der_\uu\bigl(B_\uu(k)\bigr) = \Der_{\bf v}\bigl(B_{\bf v}(k+1)\bigr)$ up to permutation of letters.
\end{itemize}
\end{claim}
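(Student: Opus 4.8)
The plan is to verify the three assertions in sequence, each time exploiting the concrete form of $\varphi_i$ from \eqref{eq:elementary_morphisms}, namely that $\varphi_i$ prepends the letter $i$ to every letter of $\mathcal{A}\setminus\{i\}$ and fixes $i$. The key structural fact I would isolate first is a ``synchronization'' property: since $\varphi_i(\uu)$ has the letter $i$ as its dominant letter and every occurrence of a non-$i$ letter in $\varphi_i(w)$ is immediately preceded by an $i$, the images $\varphi_i(a)$ for $a \in \mathcal{A}$ are ``prefix-code-like'' with respect to parsing from left to right starting at an occurrence of $i$. Concretely, a factor of ${\bf v}=\varphi_i(\uu)$ that begins and ends appropriately can be uniquely desubstituted into a factor of $\uu$; this is what lets us transport occurrences, return words, and their order back and forth.

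First I would prove $B_{\bf v}(k+1)=\varphi_i(B_\uu(k))i$. Since $\varphi_i$ is injective and the map $w\mapsto\varphi_i(w)i$ sends $\mathcal{L}_\uu$ into $\mathcal{L}_{\bf v}$ (using the relation \eqref{eq:directive_sequence} between $\uu^{(n)}$ and $\uu^{(n+1)}$, equivalently the episturmian desubstitution), it suffices to check that $\varphi_i(w)i$ is left and right special in ${\bf v}$ whenever $w$ is bispecial in $\uu$, and conversely that every bispecial factor of ${\bf v}$ of positive length arises this way. Right-speciality: if $wa, wb \in \mathcal{L}_\uu$ with $a\neq b$, then $\varphi_i(w)\varphi_i(a), \varphi_i(w)\varphi_i(b)\in\mathcal{L}_{\bf v}$, and since $a\neq b$ the words $\varphi_i(a),\varphi_i(b)$ start with distinct letters after the leading $i$ — unless one of $a,b$ equals $i$, in which case $\varphi_i(w)i$ is followed by $i$ versus a non-$i$ letter; either way $\varphi_i(w)i$ is right special. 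Left-speciality follows because $\varphi_i(w)i$, being a prefix of the standard word ${\bf v}$ (bispecial factors of standard Arnoux--Rauzy words are prefixes), is automatically left special. Counting bispecial factors by length — there is at most one per length, and the lengths $|\varphi_i(B_\uu(k))i|$ interleave correctly with the lengths between which no bispecial factor of ${\bf v}$ occurs — pins down the indexing $k\mapsto k+1$, the shift accounting for $B_{\bf v}(1)=i=\varphi_i(\varepsilon)i$.

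Next, for the return words: let $r_0,\dots,r_{d-1}$ be the return words to $B_\uu(k)$ in $\uu$, so $\uu=r_{j_0}r_{j_1}\cdots$ with $\Der_\uu(B_\uu(k))=j_0j_1\cdots$. Applying $\varphi_i$ gives ${\bf v}=\varphi_i(r_{j_0})\varphi_i(r_{j_1})\cdots$. The occurrences of $B_\uu(k)$ in $\uu$ are exactly the prefix-lengths $|r_{j_0}\cdots r_{j_{t-1}}|$; I would show these map under $\varphi_i$, with the $\varphi_i(w)i$ correction from the first bullet, precisely onto the occurrences of $B_{\bf v}(k+1)=\varphi_i(B_\uu(k))i$ in ${\bf v}$. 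The forward inclusion is immediate once one notes that $\varphi_i(r)$ always ends in a letter $\neq i$ or one tracks the single trailing $i$ carefully: between two consecutive occurrences of $B_\uu(k)$ separated by return word $r_j$, the images are separated by $\varphi_i(r_j)$, and the trailing $i$ of one copy of $B_{\bf v}(k+1)$ is supplied by the leading $i$ of the image of the next letter. The reverse inclusion — that there are no extra occurrences of $B_{\bf v}(k+1)$ — is where I expect the real work: one must argue that an occurrence of $\varphi_i(B_\uu(k))i$ in ${\bf v}$ must be ``aligned'' with the $\varphi_i$-block decomposition, which is precisely the synchronization property above (an occurrence of the long factor $\varphi_i(B_\uu(k))i$ forces a block boundary, because its internal structure of $i$'s determines where desubstitution must cut). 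Granting this, the return words to $B_{\bf v}(k+1)$ are exactly the $\varphi_i(r_j)$ (they are distinct since $\varphi_i$ is injective, and there are $d$ of them, matching the count of return words for a $d$-ary Arnoux--Rauzy word), and the order in which they appear in ${\bf v}$ is $j_0j_1j_2\cdots$, identical to $\Der_\uu(B_\uu(k))$ up to the arbitrary labeling of return words. This simultaneously gives the second and third bullets.

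The main obstacle is thus the injectivity/synchronization lemma for $\varphi_i$ on the relevant factors — showing the desubstitution is unambiguous so that occurrences, and hence return words and their ordering, are transported faithfully. Everything else is bookkeeping with lengths and the definition of $\varphi_i$. As the excerpt notes, this synchronization is implicit in the machinery of \cite{JuVu} or \cite{Med}, so if a fully self-contained argument is too long one may simply cite those; but the direct verification sketched above is short given the transparency of the $\varphi_i$.
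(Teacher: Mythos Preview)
The paper does not actually prove this claim: it merely asserts that the statement ``follows immediately'' from the simple form of the morphisms $\varphi_i$, and points to \cite{JuVu} and \cite{Med} as alternative sources. Your proposal is therefore not competing with a proof in the paper but rather supplying one where the paper declines to.

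Your sketch is correct and is precisely the kind of direct verification the paper is gesturing at. You correctly isolate the synchronization (unique desubstitution) property of $\varphi_i$ as the one nontrivial ingredient: once one knows that every occurrence of $\varphi_i(B_\uu(k))i$ in ${\bf v}$ is aligned with the $\varphi_i$-block decomposition, the transport of occurrences, return words, and their order is bookkeeping. The places where your argument stays at sketch level---the exact indexing $k\mapsto k+1$ via the length bijection, and the alignment argument itself---are exactly the points the paper defers to the literature, so there is no gap between your treatment and the paper's intent. If anything, your proposal is more explicit than the paper; the two are aligned in spirit, with yours being the fleshed-out version.
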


\begin{corollary} \label{coro:history_of_bispecial_factors} 
Let $\uu$ be a standard Arnoux--Rauzy word with the directive sequence $(i_n)_{n\geq 0}$ and $\bigl(\uu^{(n)}\bigr)_{n\geq 0}$ be the sequence satisfying  \eqref{eq:directive_sequence}.
Then the derived word to $B_\uu(k)$ in $\uu$ is (up to permutation of letters) the word $\uu^{(k)}$ and the corresponding return words are $\psi(0)$, $\psi(1), \ldots$, $\psi(d-1)$, where $\psi = \varphi_{i_0}\varphi_{i_1}\cdots \varphi_{i_{k-1}}$.  
\end{corollary}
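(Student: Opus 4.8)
The plan is to prove the statement by induction on $k$, stated simultaneously for \emph{all} standard Arnoux--Rauzy words, using Claim \ref{claim:derived_word_of_preimage} as the engine. Concretely, for a standard Arnoux--Rauzy word $\uu$ with directive sequence $(i_n)_{n\geq 0}$ and associated sequence $\bigl(\uu^{(n)}\bigr)_{n\geq 0}$ satisfying \eqref{eq:directive_sequence}, I would show: for every $k \in \N$, with a suitable labeling of the return words, the derived word $\Der_\uu\bigl(B_\uu(k)\bigr)$ equals $\uu^{(k)}$ and the return words to $B_\uu(k)$ in $\uu$ are $\psi(0),\psi(1),\ldots,\psi(d-1)$, where $\psi=\varphi_{i_0}\varphi_{i_1}\cdots\varphi_{i_{k-1}}$ (interpreted as the identity morphism when $k=0$).

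For the base case $k=0$ we have $B_\uu(0)=\varepsilon$; its return words are the single letters $0,1,\ldots,d-1$ (every letter occurs in $\uu$), the morphism $\psi$ is the identity so $\psi(a)=a$ for each $a\in\mathcal{A}$, and $\Der_\uu(\varepsilon)=\uu=\uu^{(0)}$ by the remark following Definition \ref{defi:derived_word}. Hence the statement holds for $k=0$.

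For the inductive step, assume the statement holds at level $k$ for every standard Arnoux--Rauzy word, and let $\uu$ be as above. The word $\uu^{(1)}$ is a standard Arnoux--Rauzy word whose directive sequence is the shift $(i_{n+1})_{n\geq 0}$ and whose associated sequence of words is $\bigl(\uu^{(n+1)}\bigr)_{n\geq 0}$, and $\uu=\varphi_{i_0}\bigl(\uu^{(1)}\bigr)$ by \eqref{eq:directive_sequence}. I would then apply Claim \ref{claim:derived_word_of_preimage} with $\vv=\uu$, $i=i_0$, and the word ``$\uu$'' of the claim taken to be $\uu^{(1)}$: if $r_0,\ldots,r_{d-1}$ are the return words to $B_{\uu^{(1)}}(k)$ in $\uu^{(1)}$, then $\varphi_{i_0}(r_0),\ldots,\varphi_{i_0}(r_{d-1})$ are the return words to $B_\uu(k+1)$ in $\uu$, and $\Der_\uu\bigl(B_\uu(k+1)\bigr)=\Der_{\uu^{(1)}}\bigl(B_{\uu^{(1)}}(k)\bigr)$ up to permutation of letters. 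By the induction hypothesis applied to $\uu^{(1)}$, the latter derived word is $\uu^{(k+1)}$ and $r_a=\varphi_{i_1}\varphi_{i_2}\cdots\varphi_{i_k}(a)$ for each $a\in\mathcal{A}$; therefore the return words to $B_\uu(k+1)$ in $\uu$ are $\varphi_{i_0}\varphi_{i_1}\cdots\varphi_{i_k}(a)=\varphi_{i_0}\varphi_{i_1}\cdots\varphi_{i_{(k+1)-1}}(a)$, which is exactly $\psi(a)$ for the morphism $\psi$ attached to level $k+1$, and the derived word is $\uu^{(k+1)}$. This closes the induction.

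Since Claim \ref{claim:derived_word_of_preimage} does the heavy lifting, the proof is essentially bookkeeping, and the only points that need care are: (i) setting up the induction over \emph{all} standard Arnoux--Rauzy words so that the hypothesis can be invoked on the preimage $\uu^{(1)}$ rather than on $\uu$ itself; (ii) tracking the indices of the directive sequence under the shift, so that $\varphi_{i_1}\cdots\varphi_{i_k}$ correctly becomes $\varphi_{i_0}\varphi_{i_1}\cdots\varphi_{i_k}$; and (iii) treating both the derived word and the labeling of the return words consistently up to a permutation of $\mathcal{A}$, so that the chosen ordering of the $r_a$ matches $\uu^{(k+1)}$. I do not anticipate any genuine difficulty beyond this bookkeeping.
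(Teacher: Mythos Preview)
Your proposal is correct and follows essentially the same approach as the paper: both arguments start from the trivial description of the return words and derived word for $B_{\uu^{(k)}}(0)=\varepsilon$ and then propagate this information back to $\uu$ via repeated use of Claim~\ref{claim:derived_word_of_preimage}. The paper simply writes out the chain $\Der_{\uu}(B_\uu(k)) = \Der_{\uu^{(1)}}(B_{\uu^{(1)}}(k-1)) = \cdots = \Der_{\uu^{(k)}}(B_{\uu^{(k)}}(0))$ directly rather than packaging it as a formal induction, but the content is identical.
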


\begin{proof}
Obviously, the bispecial factor $\varepsilon$ has in the word $\uu^{(k)}$ the return words $0, 1, \ldots, d-1$ and the derived word (up to permutation of letters) to $B_{\uu^{(k)}}(0) =\varepsilon$  in $\uu^{(k)}$ is $\uu^{(k)}$. 
By repeated application of Claim \ref{claim:derived_word_of_preimage} we get 
$$
\Der_{\uu}(B_\uu(k)) = \Der_{\uu^{(1)}}(B_{\uu^{(1)}}(k-1)) = \cdots = \Der_{\uu^{(k)}}(B_{\uu^{(k)}}(0)) = \uu^{(k)}
$$
and 
$$
\{r_0, \ldots, r_{d-1}\} = \{\varphi_{i_0}\varphi_{i_1}\cdots \varphi_{i_{k-1}}(0), \ldots, \varphi_{i_0}\varphi_{i_1}\cdots \varphi_{i_{k-1}}(d-1)\}\,.
$$
\end{proof}

Corollary \ref{coro:history_of_bispecial_factors} enable us to express the return words to the $k^{th}$ bispecial factor $B_\uu(k)$. 
However, we also need to know which return words are neighbouring, i.e., for which $i \neq j$ the word $r_ir_j$ is a factor of $\uu$. 
Corollary \ref{coro:history_of_bispecial_factors} transforms this question to the description of neighbouring letters in the Arnoux--Rauzy word $\uu^{(k)}$ with the directive sequence $(i_{n+k})_{n\geq 0}$, which is trivial. 

\begin{claim}  \label{claim:neighbouring_letters}
Let $\uu$ be a standard Arnoux--Rauzy word with the directive sequence $(i_n)_{n\geq 0}$. 
Then $i_0$ is the dominant letter in $\uu$ and the factors of length $2$ in $\uu$ are the words $i_0a, ai_0$ for all $a \in \mathcal{A}$. 
\end{claim}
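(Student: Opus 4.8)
The claim splits into two parts and neither is deep, so the plan is short. That $i_0$ is the dominant letter needs no argument: the directive sequence is set up (see the paragraph around \eqref{eq:directive_sequence}) precisely so that $\uu=\uu^{(0)}=\varphi_{i_0}\bigl(\uu^{(1)}\bigr)$ with $i_0$ chosen to be the dominant letter of $\uu$. The real task is to identify $\mathcal{L}_\uu(2)$, and I would prove the equality $\mathcal{L}_\uu(2)=\{\,i_0a:a\in\mathcal{A}\,\}\cup\{\,ai_0:a\in\mathcal{A}\,\}$ by two inclusions.

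The forward inclusion is immediate from dominance: since $i_0$ occurs in every factor of length $2$, each such factor has the form $i_0a$ or $ai_0$. For the reverse inclusion I would first pin down that $u_0=i_0$: every image $\varphi_{i_0}(j)$ with $j\in\mathcal{A}$ begins with $i_0$, hence so does $\uu=\varphi_{i_0}(\uu^{(1)})$. Since $\uu$ is standard, its one-letter prefix $u_0=i_0$ is left special; being the unique left special factor of length $1$ and coinciding with its own reversal, it is --- by closure of $\mathcal{L}_\uu$ under reversal together with uniqueness --- also the unique right special factor of length $1$. A right special factor of length $1$ in a $d$-ary Arnoux--Rauzy word has $d$ distinct right extensions, that is, all of $\mathcal{A}$; hence $i_0a\in\mathcal{L}_\uu(2)$ for every $a\in\mathcal{A}$, and reversing each of these factors gives $ai_0\in\mathcal{L}_\uu(2)$ for every $a\in\mathcal{A}$, which is the remaining inclusion.

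The two inclusions together prove the claim; equivalently, one may observe that the right-hand set already has $2d-1$ pairwise distinct members (its two halves overlap only in $i_0i_0$) and that $\#\mathcal{L}_\uu(2)=\mathcal{C}_\uu(2)=2d-1$ by Definition~\ref{def:arnoux_rauzy}, so the easy forward inclusion already forces equality. I do not foresee a genuine obstacle; the one step worth spelling out carefully, rather than waving through, is that $u_0=i_0$ --- being a prefix of a standard word, hence left special, hence (by reversal and uniqueness) bispecial --- is followed by every letter of $\mathcal{A}$.
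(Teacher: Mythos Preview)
Your argument is correct and complete; both the direct two-inclusions proof and the counting shortcut work without issue. The paper itself gives no proof of this claim---it is introduced as ``trivial'' and left unproved---so there is nothing to compare against; your write-up is exactly the kind of justification one would expect here.
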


For every $k \in \N$ and every letter $a \in \mathcal{A}$ we define  $S_a(k) = \sup \{\ell : 0 \leq \ell < k, i_\ell = a\}$. 
As usual, if the set is empty, i.e., $i_\ell \neq a$ for all $\ell < k$, then  $S_a(k) = -\infty$. 
Let us emphasize that $S_a(k)  =S_b(k)$ for two distinct  letters $a$ and $b$ if and only if $S_a(k)  =S_b(k) = -\infty$.

\begin{theorem} \label{thm:nonrepetitive_complexity_arnoux_rauzy}
Let $\uu$ be a $d$-ary Arnoux--Rauzy word. 
For every integer $n\geq 1$ we take the unique $k$ such that $|B_\uu(k-1)| < n \leq |B_\uu(k)|$.
Then we have
$$
nr\mathcal{C}_\uu(n) = |\varphi_{i_0}\varphi_{i_1} \cdots \varphi_{i_{k-1}}\varphi_{i_{k}}(a)| -1 - |B_\uu(k)| + n\,,$$
where $(i_n)_{n \geq 0}$ is the directive sequence of the standard Arnoux-Rauzy word with the language $\mathcal{L}_\uu$ and $a \in \mathcal{A}$ is any letter different from $i_k$ such that  
$ S_a(k) = \inf \{S_b(k) : b  \in \mathcal{A}, b \neq i_k \}$.
\end{theorem}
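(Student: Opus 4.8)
The plan is to combine Proposition~\ref{prop:arnoux_rauzy_nonrepetitive_complexity} with Corollary~\ref{coro:history_of_bispecial_factors} and Claim~\ref{claim:neighbouring_letters}, reducing the computation of $nr\mathcal{C}_\uu(n)$ to an explicit length computation on images of letters under a composition of elementary morphisms. Since $nr\mathcal{C}_\uu$ depends only on $\mathcal{L}_\uu$, we may assume $\uu$ is standard with directive sequence $(i_n)_{n\geq 0}$; fix $n\geq 1$ and the unique $k$ with $|B_\uu(k-1)| < n \leq |B_\uu(k)|$. By Corollary~\ref{coro:history_of_bispecial_factors}, the return words to $B_\uu(k)$ in $\uu$ are exactly $\psi(0),\psi(1),\ldots,\psi(d-1)$ with $\psi = \varphi_{i_0}\varphi_{i_1}\cdots\varphi_{i_{k-1}}$, and the derived word $\Der_\uu(B_\uu(k))$ equals $\uu^{(k)}$ up to permutation of letters. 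Part~(2) of Proposition~\ref{prop:arnoux_rauzy_nonrepetitive_complexity} already handles the case $n < |B_\uu(k)|$ by an affine shift, so the whole theorem follows once we establish the case $n = |B_\uu(k)|$, namely that $nr\mathcal{C}_\uu(|B_\uu(k)|) = |\psi\varphi_{i_k}(a)| - 1$ for the prescribed letter $a$.

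The first main step is to identify which pairs of return words are neighbouring. By part~(1) of Proposition~\ref{prop:arnoux_rauzy_nonrepetitive_complexity}, $nr\mathcal{C}_\uu(|B_\uu(k)|) = \max\{|r_ir_j| : r_ir_j \in \mathcal{L}_\uu,\ i\neq j\} - 1$, and $r_ir_j \in \mathcal{L}_\uu$ iff the letters $ij$ (respectively $ji$) form a factor of the derived word $\uu^{(k)}$. By Claim~\ref{claim:neighbouring_letters} applied to $\uu^{(k)}$, whose directive sequence is $(i_{n+k})_{n\geq 0}$, the factors of length $2$ of $\uu^{(k)}$ are precisely $i_k a$ and $a i_k$ for all $a \in \mathcal{A}$. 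Hence the neighbouring pairs are exactly $\{i_k, a\}$ with $a \neq i_k$, and therefore
$$
nr\mathcal{C}_\uu(|B_\uu(k)|) = \max_{a \neq i_k}\bigl( |\psi(i_k)| + |\psi(a)| \bigr) - 1 = |\psi(i_k)| + \max_{a\neq i_k}|\psi(a)| - 1\,.
$$

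The second main step is to compute $|\psi(i_k)| + |\psi(a)|$ and to show it equals $|\psi\varphi_{i_k}(a)|$ for every $a \neq i_k$, so that maximising over $a$ produces the closed form. This is where the definition \eqref{eq:elementary_morphisms} of $\varphi_{i_k}$ does the work: $\varphi_{i_k}(i_k) = i_k$ and $\varphi_{i_k}(a) = i_k a$ for $a \neq i_k$, so $\psi\varphi_{i_k}(a) = \psi(i_k a) = \psi(i_k)\psi(a)$, whence $|\psi\varphi_{i_k}(a)| = |\psi(i_k)| + |\psi(a)|$ directly; equivalently one can argue via Parikh vectors and \eqref{eq:parikh_vector_of_image_via_incid_matrix}. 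Maximising $|\psi(a)|$ over $a \neq i_k$ amounts to choosing $a$ as prescribed: the lengths $|\psi(a)| = |\varphi_{i_0}\cdots\varphi_{i_{k-1}}(a)|$ are governed by how recently the letter $a$ last appeared in the prefix $i_0i_1\cdots i_{k-1}$ of the directive sequence—a letter that occurred more recently (larger $S_a(k)$) has undergone fewer length-increasing compositions, while a letter with $S_a(k) = -\infty$ (never occurring) yields the longest image. This is the quantitative point to verify carefully: that $|\psi(a)|$ is monotone decreasing in $S_a(k)$, so the supremum of $|\psi(a)|$ over $a\neq i_k$ is attained exactly at a letter $a$ with $S_a(k) = \inf\{S_b(k) : b\neq i_k\}$. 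One shows this by induction on $k$: passing from $\psi' = \varphi_{i_0}\cdots\varphi_{i_{k-2}}$ to $\psi = \psi'\varphi_{i_{k-1}}$ multiplies lengths as $|\psi(a)| = |\psi'(i_{k-1})| + |\psi'(a)|$ for $a\neq i_{k-1}$ and leaves $|\psi(i_{k-1})| = |\psi'(i_{k-1})|$, so all letters other than the just-used $i_{k-1}$ get the same additive boost, and the one letter that was last touched at step $k-1$ stays smallest; iterating, the ordering of the $|\psi(a)|$ is the reverse of the ordering of the $S_a(k)$.

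The main obstacle I expect is the bookkeeping in this last monotonicity claim—making precise that $|\psi(a)|$ depends on $a$ only through $S_a(k)$ in a strictly order-reversing way, and handling the degenerate case where several letters share $S_a(k) = -\infty$ (which the paragraph before the theorem explicitly flags as the only way ties occur). In that degenerate case all such letters have equal, maximal image length, so the choice of $a$ is immaterial and the formula is still well defined. Once the case $n = |B_\uu(k)|$ is settled, substituting into part~(2) of Proposition~\ref{prop:arnoux_rauzy_nonrepetitive_complexity} gives $nr\mathcal{C}_\uu(n) = nr\mathcal{C}_\uu(|B_\uu(k)|) - |B_\uu(k)| + n = |\psi\varphi_{i_k}(a)| - 1 - |B_\uu(k)| + n$, which is exactly the claimed formula, and this identity holds trivially when $n = |B_\uu(k)|$ as well.
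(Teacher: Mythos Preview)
Your proposal is correct and follows essentially the same route as the paper: reduce to the standard word, invoke Proposition~\ref{prop:arnoux_rauzy_nonrepetitive_complexity}, identify neighbouring return-word pairs via Corollary~\ref{coro:history_of_bispecial_factors} and Claim~\ref{claim:neighbouring_letters}, and then argue that $|\psi(a)|$ is order-reversing in $S_a(k)$. The only minor difference is in how the monotonicity step is carried out: the paper splits $\psi = \sigma\varphi_b\theta$ at the last occurrence of $\varphi_b$ and shows $\psi(b)$ is a \emph{proper prefix} of $\psi(a)$ whenever $S_a(k) < S_b(k)$, whereas you run a step-by-step induction on $k$ tracking only lengths; both arguments are valid and equally short, and the paper's prefix observation is slightly stronger than needed here.
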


\begin{proof}  
Since the function $nr\mathcal{C}_\uu$ depends only on the language $\mathcal{L}_\uu$ and not on the word $\uu$ itself, we can work with the standard Arnoux-Rauzy word $\vv$ such that $\mathcal{L}_\vv = \mathcal{L}_\uu$ instead of $\uu$.
We denote  $(i_n)_{n \geq 0}$ the directive sequence of $\vv$ and to simplify the notation we also denote $\psi = \varphi_{i_0}\varphi_{i_1} \cdots \varphi_{i_{k-1}}$.

We start from Proposition \ref{prop:arnoux_rauzy_nonrepetitive_complexity} and using the previous claims we express $nr\mathcal{C}_\vv(|B_\vv(k)|)$ more explicitly. 
By Corollary \ref{coro:history_of_bispecial_factors}  and Claim \ref{claim:neighbouring_letters} the admissible pairs of return words to $B_\vv(k)$ are 
$$
\{|r_ir_j|\,  :\, r_ir_j \in \mathcal{L}_\vv, \, 0\leq i,j \leq d-1, i \neq j\} = \{|\psi(i_ka)| : a \in \mathcal{A}, a \neq i_k\}\,.
$$
It suffices to determine for which letter $a \neq i_k$ the image $|\psi(a)|$ is the longest possible. 
Let us emphasize that for every $i \in \{0, \ldots, d-1\}$ and every words $x,y \in \mathcal{A}^*$ we have
\begin{itemize}
\item[(i)] $\varphi_i(xa) = \varphi_i(x)ia$ if $i \neq a$ and $\varphi_i(xa) = \varphi_i(x)i$ if $i = a$;
\item[(ii)] if $x$ is a proper prefix of $y$, i.e., $y = xz$ for some non-empty word $z$, then $\varphi_i(x)$ is a proper prefix of $\varphi_i(y) = \varphi_i(x)\varphi_i(z)$.
\end{itemize}  
For  two distinct letters $a, b \in \mathcal{A}$ we discuss two cases. 

-- If  $S_a(k) = S_b(k)$, then  the morphisms $\varphi_a, \varphi_b$ are not included in the decomposition of $\psi$. 
Thus by application of Item (i) we get $\psi(a) = x'a$ and $\psi(b) = x'b$ for some non-empty word $x' \in \mathcal{A}^*$ and so $|\psi(a)| = |\psi(b)|$.

-- If  $S_a(k)< S_b(k)$, then we split $\psi = \sigma\varphi_b\theta$ such that the decomposition of the morphism $\theta$ contains neither $\varphi_a$ nor $\varphi_b$. 
Then by Item (i) we have $\theta(a) = x'a$ and $\theta(b) = x'b$ for some non-empty word $x' \in \mathcal{A}^*$ and since $\varphi_b(x'a) = \varphi_b(x')ba$ and $\varphi_b(x'b) = \varphi_b(x')b$, the word $\varphi_b(\theta(b))$ is a proper prefix of $\varphi_b(\theta(a))$. 
By Item (ii) it means that also $\sigma(\varphi_b(\theta(b))) = \psi(b)$ is a proper prefix of $\sigma(\varphi_b(\theta(a))) = \psi(a)$ and so $|\psi(b)| < |\psi(a)|$. 

We may conclude that   
$$
nr\mathcal{C}_\vv(|B_\vv(k)|)  = \max\{|r_ir_j|\,  :\, r_ir_j \in \mathcal{L}_\vv, \, 0\leq i, j \leq d-1, i \neq j\} - 1= |\psi(i_ka)| -1 = |\psi\varphi_{i_{k}}(a)| -1\, ,
$$
where $a$ is any letter different from $i_k$ such that $S_a(k) = \inf \{S_b(k) : b \neq i_k \}$.
By Proposition \ref{prop:arnoux_rauzy_nonrepetitive_complexity} it concludes the proof, since for all $n, k \in \N$ we clearly have $B_\uu(k) = B_\vv(k)$ and $nr\mathcal{C}_\uu(n) = nr\mathcal{C}_\vv(n)$.
\end{proof}

\section{Initial non-repetitive complexity of standard Arnoux--Rauzy words}

The following lemma uses again the notation
$f_n(i)$ for the factor of length $n$  occurring in $\uu$ at the position $i$. 

\begin{lemma} \label{lem:initial_nonrepetitive_complexity}
Let $\uu = u_0u_1u_2\cdots$ be a recurrent infinite word, $n \in \mathbb{N}$ and   $m=inr\mathcal{C}_\uu(n)$. 
Then the set $L = \{f_n(0), f_n(1), \ldots, f_n(m-1)\}$ contains $m$ distinct factors of $\mathcal{L}_\uu(n)$ and the factor $f_n(m)$ is either left special and $f_n(m) = f_n(i)$ for some $i, 0<i<m$, or $f_n(m) = f_n(0)$. 
\end{lemma}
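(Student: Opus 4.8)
The plan is to mirror the argument of Lemma \ref{lem:nonrepetitive_complexity} but in the simpler "prefix" setting, where the left end of the window is forced to be at position $0$. First I would fix $n$ and set $m = inr\mathcal{C}_\uu(n)$. By definition of $inr\mathcal{C}_\uu(n)$, the factors $f_n(0), f_n(1), \ldots, f_n(m-1)$ are pairwise distinct, so $L = \{f_n(0), \ldots, f_n(m-1)\}$ has exactly $m$ elements; this is the first assertion and requires no work beyond unwinding the definition. By maximality of $m$, the factor $f_n(m)$ must coincide with $f_n(i)$ for some $i$ with $0 \le i < m$ (otherwise the window could be extended by one, contradicting maximality). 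So the only thing to prove is the dichotomy: either $i = 0$, or $i > 0$ and $f_n(m)$ is left special.

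For the substantive case, suppose $i > 0$, i.e.\ $f_n(m) = f_n(i)$ with $0 < i < m$. I would compare the two occurrences of this common length-$n$ factor at positions $i-1$ and $m-1$ shifted by one: the factors $f_{n+1}(i-1)$ and $f_{n+1}(m-1)$ are words of length $n+1$ sharing the common suffix $f_n(i) = f_n(m)$ of length $n$. If the preceding letters $u_{i-1}$ and $u_{m-1}$ were equal, then $f_n(i-1) = f_n(m-1)$, contradicting the fact that all of $f_n(0), \ldots, f_n(m-1)$ are distinct (here I use $i-1 \ge 0$, which holds because $i>0$, and $m-1 \ge 0$). Hence $u_{i-1} \ne u_{m-1}$, and both $u_{i-1}f_n(i)$ and $u_{m-1}f_n(i)$ lie in $\mathcal{L}_\uu$, which says precisely that $f_n(i) = f_n(m)$ is left special. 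This completes the dichotomy, since in the remaining case $i=0$ we simply have $f_n(m) = f_n(0)$ with nothing further to show.

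The only mild subtlety — the "main obstacle" such as it is — is bookkeeping about whether the relevant indices are nonnegative: the argument that $f_n(m)$ is left special needs a letter strictly before position $i$, so it genuinely requires $i \ge 1$, and this is exactly why the case $i = 0$ must be carved out separately (there is no letter $u_{-1}$ to compare with $u_{m-1}$). Note also that recurrence of $\uu$ is what guarantees $m$ is finite (equivalently, that $f_n(m)$ exists and repeats some earlier factor), so the hypothesis that $\uu$ is recurrent is used, though aperiodicity is not needed here. All the ingredients are elementary once the comparison of $f_{n+1}(i-1)$ with $f_{n+1}(m-1)$ is set up, so the write-up should be short.
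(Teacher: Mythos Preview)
Your proof is correct and follows exactly the approach the paper takes: the paper's proof is a one-line reference to Case~I of Lemma~\ref{lem:nonrepetitive_complexity}, which is precisely the comparison of $u_{i-1}$ with $u_{m-1}$ that you carry out. One small remark: finiteness of $m$ already follows from the finiteness of the alphabet via pigeonhole, not from recurrence, so the recurrence hypothesis is in fact not used here.
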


\begin{proof} 
The proof of Case I of Lemma \ref{lem:nonrepetitive_complexity} immediately gives this statement.  
\end{proof}

\begin{theorem} \label{thm:initial_nonrepetitive_complexity_arnoux_rauzy}
Let $\uu$ be a standard $d$-ary Arnoux--Rauzy word with the directive sequence $(i_n)_{n \geq 0}$. 
For every integer $n\geq 1$ we take the unique $k$ such that $|B_\uu(k-1)| < n \leq |B_\uu(k)|$. Then we have
$$
inr\mathcal{C}_\uu(n) =  |\varphi_{i_0}\varphi_{i_1} \cdots \varphi_{i_{k-1}}(i_k)|\,.
$$  
\end{theorem}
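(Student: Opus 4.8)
The plan is to read $\uu$ as an infinite walk in the Rauzy graph $\Gamma_\uu(n)$ and to locate the first vertex it revisits. No reduction to a standard word is needed here, unlike in the proof of Theorem~\ref{thm:nonrepetitive_complexity_arnoux_rauzy}, since $\uu$ is already standard and $i\nrc_\uu$ genuinely depends on $\uu$ and not only on $\mathcal{L}_\uu$. Because $\uu$ is standard, every prefix of $\uu$ is left special, so the prefix $p_n := u_0u_1\cdots u_{n-1}$ is the unique left special factor of $\mathcal{L}_\uu(n)$, which we call $\alpha$, and we let $\beta$ be the unique right special factor of length $n$. As established in the proof of Proposition~\ref{prop:arnoux_rauzy_nonrepetitive_complexity}, $\Gamma_\uu(n)$ is a union of $d$ simple cycles $C_0,\ldots,C_{d-1}$ sharing exactly the simple path from $\alpha$ to $\beta$, and $C_i$ has $\ell_i$ vertices. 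By Lemma~\ref{lem:initial_nonrepetitive_complexity}, $i\nrc_\uu(n)$ is the least $m$ for which $f_n(m)\in\{f_n(0),\ldots,f_n(m-1)\}$, that is, the step at which the walk $f_n(0)\to f_n(1)\to\cdots$, which starts at $f_n(0)=\alpha$, first meets an already visited vertex.

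First I would trace this walk. Every vertex on the path from $\alpha$ to $\beta$ has out-degree $1$, so the walk is forced along this path to $\beta$; from $\beta$ it enters a single cycle $C_{j_0}$, and from then on every visited vertex has out-degree $1$ until the walk returns to $\alpha$, which happens after exactly $\ell_{j_0}$ steps. The vertices visited strictly before that are the $\ell_{j_0}$ pairwise distinct vertices of the simple cycle $C_{j_0}$, and $f_n(\ell_{j_0})=\alpha=f_n(0)$ is the first repetition. Hence $i\nrc_\uu(n)=\ell_{j_0}$; equivalently, $\ell_{j_0}$ is the length of the first return word to the prefix $p_n$ in $\uu$, namely the position of the second occurrence of $p_n$.

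It remains to compute $\ell_{j_0}$. For $|B_\uu(k-1)|<n<|B_\uu(k)|$ the prefix $p_n$ is not right special, because the only right special prefix of length in the range $(|B_\uu(k-1)|,|B_\uu(k)|]$ is $p_{|B_\uu(k)|}=B_\uu(k)$; hence $p_n$ is always followed by the same letter, so the occurrences of $p_n$ and of $p_{n+1}$ coincide and $p_n$, $p_{n+1}$ have the same return words appearing in the same order along $\uu$. Iterating up to length $|B_\uu(k)|$, the prefix $p_n$ has the same return words, in the same order of appearance in $\uu$, as $B_\uu(k)=p_{|B_\uu(k)|}$. By Corollary~\ref{coro:history_of_bispecial_factors} these return words are $\varphi_{i_0}\varphi_{i_1}\cdots\varphi_{i_{k-1}}(0),\ldots,\varphi_{i_0}\varphi_{i_1}\cdots\varphi_{i_{k-1}}(d-1)$, and their order of appearance in $\uu$ is the derived word $\uu^{(k)}$, whose first letter is $i_k$, since $\uu^{(k)}=\varphi_{i_k}(\uu^{(k+1)})$ begins with $i_k$ (equivalently, $i_k$ is the dominant letter of $\uu^{(k)}$ by Claim~\ref{claim:neighbouring_letters}). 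Therefore the first return word to $p_n$ in $\uu$ is $\varphi_{i_0}\varphi_{i_1}\cdots\varphi_{i_{k-1}}(i_k)$, which gives $i\nrc_\uu(n)=|\varphi_{i_0}\varphi_{i_1}\cdots\varphi_{i_{k-1}}(i_k)|$ for every $n$ with $|B_\uu(k-1)|<n\le|B_\uu(k)|$; in particular this value is constant on each such interval, in contrast with $\nrc_\uu$.

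The argument is essentially forced once the Rauzy graph description and Corollary~\ref{coro:history_of_bispecial_factors} are in place; the point needing the most care — the main, and rather mild, obstacle — is the first step, namely that standardness (so that $f_n(0)=\alpha$) together with the out-degree $1$ property of all vertices off the distinguished $\alpha$-to-$\beta$ path pins the first repetition to the return to $\alpha$ after one full cycle, as well as the accompanying bookkeeping that the return words and their order of appearance are preserved as $n$ runs through $(|B_\uu(k-1)|,|B_\uu(k)|]$.
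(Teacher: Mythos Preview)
Your proof is correct and follows essentially the same route as the paper: identify $i\nrc_\uu(n)$ with the length of the first return word to the length-$n$ prefix (the paper does this via Lemma~\ref{lem:initial_nonrepetitive_complexity} together with the fact that $f_n(0)$ is the unique left special factor, you via the explicit Rauzy-graph walk), then argue that this prefix has the same return words in the same order as $B_\uu(k)$ because it is not right special, and finally invoke Corollary~\ref{coro:history_of_bispecial_factors} and Claim~\ref{claim:neighbouring_letters} to read off the first return word as $\varphi_{i_0}\cdots\varphi_{i_{k-1}}(i_k)$. The only difference is cosmetic: you spell out the cycle traversal in $\Gamma_\uu(n)$, while the paper shortcuts directly to ``$m$ equals the length of the first return word to $f_n(0)$''.
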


\begin{proof}
Let $\uu$ be a standard $d$-ary Arnoux--Rauzy word and $n \in \N$.
We denote $m = inr\mathcal{C}_\uu(n)$ and $L = \{f_n(0), \ldots, f_n(m-1)\}$ the set from Lemma \ref{lem:initial_nonrepetitive_complexity}. 
Then $f_n(m) = f_n(0)$, since the word $f_n(0)$ is the only left special factor of $\uu$ of length $n$.  
It means that $m$ is equal to the length of the first return word to $f_n(0)$. 
We now determine its length.

If $n = |B_\uu(k)|$ for some $k \in \N$, it means that $f_n(0) = B_\uu(k)$ is bispecial factor.
Then by Corollary \ref{coro:history_of_bispecial_factors} the first return word to $f_n(0)$ is equal to the word $\varphi_{i_0}\varphi_{i_1} \cdots \varphi_{i_{k-1}}(i_k)$, since the word $\uu^{(k)}$ is standard  and so it starts with its dominant letter, which is by Claim \ref{claim:neighbouring_letters} the letter $i_k$. 
Thus $m = |\varphi_{i_0}\varphi_{i_1} \cdots \varphi_{i_{k-1}}(i_k)|$.

If $|B_\uu(k-1)| < n < |B_\uu(k)|$, then $B_\uu(k) = f_n(0)w$ for some non-empty word $w \in \mathcal{A}^*$ since all prefixes of $\uu$ are left special factors. 
Moreover, the word $f_n(0)$ is in $\uu$ always followed by the word $w$.
Indeed, since $f_n(0)$ is not right special, there is a unique letter $a \in \mathcal{A}$ such that $f_n(0)a \in \mathcal{L}_\uu$ and we can repeat the same process until we reach $B_\uu(k)$. 
But it means that the words $f_n(0)$ and $B_\uu(k)$ have the same return words and derived words and so the first return word to $f_n(0)$ is equal to the word $\varphi_{i_0}\varphi_{i_1} \cdots \varphi_{i_{k-1}}(i_k)$. 
Thus $m = |\varphi_{i_0}\varphi_{i_1} \cdots \varphi_{i_{k-1}}(i_k)|$.
\end{proof}

Let us emphasize that for non-standard Arnoux--Rauzy words the evaluating of the initial non-repetitive complexity is much more complicated, as, unlike the standard case, we do not have the control over the positions of the vertices corresponding to prefixes in the respective Rauzy graphs.

\begin{corollary} \label{thm:initial_nonrepetitive_complexity_sturmian} 
Let $\uu$ be a standard Sturmian word.  
Then $inr\mathcal{C}_\uu(n) =n+1$ for infinitely many  $n \in \mathbb{N}$.
\end{corollary}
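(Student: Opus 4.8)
The plan is to derive everything from Theorem~\ref{thm:initial_nonrepetitive_complexity_arnoux_rauzy}. For a standard Sturmian word $\uu$ with directive sequence $(i_n)_{n\ge 0}$ over $\{0,1\}$, write $\psi_k=\varphi_{i_0}\varphi_{i_1}\cdots\varphi_{i_{k-1}}$ (with $\psi_0=\mathrm{id}$) and $b_k=|B_\uu(k)|$. That theorem says $inr\mathcal{C}_\uu(n)=|\psi_k(i_k)|$ for every $n$ with $b_{k-1}<n\le b_k$, so $inr\mathcal{C}_\uu$ is a step function, constant on each interval $(b_{k-1},b_k]$; moreover $0=b_0<b_1<b_2<\cdots$ is strictly increasing because the bispecial factors are nested prefixes of $\uu$. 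Since also $inr\mathcal{C}_\uu(n)\le nr\mathcal{C}_\uu(n)=\mathcal{C}_\uu(n)=n+1$ for a Sturmian word, it is enough to produce infinitely many $n$ with $inr\mathcal{C}_\uu(n)\ge n+1$; by the step-function shape, it suffices to find infinitely many indices $k$ with $|\psi_k(i_k)|\ge b_{k-1}+2$ and then take $n=b_{k-1}+1$.

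The computational heart is the identity $|\psi_k(0)|+|\psi_k(1)|=b_k+2$ for all $k\ge 0$. I would prove it via the two-cycle picture already used in the proof of Theorem~\ref{thm:nonrepetitive_complexity_sturmian}: by Corollary~\ref{coro:history_of_bispecial_factors} the two return words to $B_\uu(k)$ are $\psi_k(0)$ and $\psi_k(1)$, while $\Gamma_\uu(b_k)$ is a union of two cycles meeting only at the vertex $B_\uu(k)$, the cycle carrying a return word $r$ having exactly $|r|$ vertices; comparing with $\#\mathcal{L}_\uu(b_k)=\mathcal{C}_\uu(b_k)=b_k+1$ yields $|\psi_k(0)|+|\psi_k(1)|-1=b_k+1$. (The same identity also follows by a direct induction on $k$ from $\varphi_i(i)=i$ and $\varphi_i(j)=ij$.) Granting this, whenever $k\ge 1$ satisfies $i_k\neq i_{k-1}$ one has $\{i_{k-1},i_k\}=\{0,1\}$ and $\varphi_{i_{k-1}}(i_k)=i_{k-1}i_k$, hence
$$|\psi_k(i_k)|=|\psi_{k-1}(i_{k-1}i_k)|=|\psi_{k-1}(i_{k-1})|+|\psi_{k-1}(i_k)|=|\psi_{k-1}(0)|+|\psi_{k-1}(1)|=b_{k-1}+2\,.$$

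To finish, fix such a $k$ and set $n=b_{k-1}+1$; since $b_{k-1}<b_k$ we have $b_{k-1}<n\le b_k$, so Theorem~\ref{thm:initial_nonrepetitive_complexity_arnoux_rauzy} together with the displayed computation gives $inr\mathcal{C}_\uu(n)=|\psi_k(i_k)|=b_{k-1}+2=n+1$. Because each letter appears infinitely often in $(i_n)_{n\ge 0}$, the sequence is not eventually constant, so $i_k\neq i_{k-1}$ for infinitely many $k$, and the corresponding numbers $n=b_{k-1}+1$ are pairwise distinct since $(b_k)$ is strictly increasing; this produces infinitely many $n$ with $inr\mathcal{C}_\uu(n)=n+1$. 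The only step that is not pure bookkeeping is the identity $|\psi_k(0)|+|\psi_k(1)|=b_k+2$ (equivalently: the two return words to a length-$n$ bispecial factor of a Sturmian word have lengths summing to $n+2$); everything else is manipulation of the morphisms $\varphi_i$ and of the step-function description of $inr\mathcal{C}_\uu$ already established.
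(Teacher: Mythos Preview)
Your proof is correct and follows essentially the same route as the paper: both pick $n=|B_\uu(k-1)|+1$ at an index where the directive sequence changes letter, use the Rauzy-graph/two-cycle count to get $|\psi_{k-1}(0)|+|\psi_{k-1}(1)|=|B_\uu(k-1)|+2$, and then apply Theorem~\ref{thm:initial_nonrepetitive_complexity_arnoux_rauzy} together with $\varphi_{i_{k-1}}(i_k)=i_{k-1}i_k$ to conclude $inr\mathcal{C}_\uu(n)=n+1$. Your preliminary appeal to the upper bound $inr\mathcal{C}_\uu(n)\le nr\mathcal{C}_\uu(n)=n+1$ is a harmless extra observation, since your computation already yields equality directly.
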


\begin{proof}  
Let $(i_\ell)_{\ell  \geq 0}$ denote the directive sequence of $\uu$.
We will prove that $inr\mathcal{C}_\uu(n) = n+1$ for every $n$ such that $n = |B_\uu(k)| +1$ for some $k \in \N$ and $i_k \neq i_{k+1}$.
Since the directive sequence $(i_\ell)_{\ell  \geq 0}$ contains both letters $0$ and $1$ infinitely many times, it implies the statement of the corollary.

We take $n = |B_\uu(k)| +1$ such that $i_k \neq i_{k+1}$ and denote $r_0$ the more frequent return word to $B_\uu(k)$ and $r_1$ the other return word. 
By Corollary \ref{coro:history_of_bispecial_factors} and Claim \ref{claim:neighbouring_letters} we have $r_0 = \varphi_{i_0}\varphi_{i_1} \cdots \varphi_{i_{k-1}}(i_k)$ and $r_1 = \varphi_{i_0}\varphi_{i_1} \cdots \varphi_{i_{k-1}}(i_{k+1})$.
It also implies that $r_1$ is always followed by $r_0$, while $r_0$ can be followed both by $r_0$ and $r_1$. 

As explained before, the Rauzy graph $\Gamma_\uu({n-1})$ is composed of two cycles $C_0$ and $C_1$ with only the vertex $B_\uu(k)$ in common (see Figure \ref{fig:sturmian_initial_rauzy_graph}).
Moreover, these cycles correspond with the return words $r_0$ and $r_1$: if we start in $B_\uu(k)$ and concatenate the first letters of vertices from $C_0$, we get the return word $r_0$.
Thus the number of vertices of $C_0$ is equal to $|r_0|$. 
It is analogous for $C_1$ and $r_1$. 
This connection also means that the cycle $C_1$ is always followed by $C_0$, while $C_0$ can be followed by both $C_0$ and $C_1$. 

We denote $\alpha$ the edge from the cycle $C_0$ outcoming from the vertex $B_\uu(k)$ and $\beta$ the edge from $C_0$ incoming to $B_\uu(k)$ (see Figure~\ref{fig:sturmian_initial_rauzy_graph}).
Then $\alpha$ is the left special factor of $\uu$  of length $n$ and $\beta$ is the right special factor of $\uu$ of length $n$.
It means that the Rauzy graph $\Gamma_\uu({n})$ is composed of the cycle with $|r_0|$ + $|r_1|$ vertices and one extra edge going from the vertex $\beta$ to the vertex $\alpha$ (see Figure~\ref{fig:sturmian_initial_rauzy_graph}). 
It follows that $|r_0| + |r_1| = n+1$. 
Moreover, the return words to the factor $\alpha$ are $r_0$ and $r_0r_1$ and $|r_0r_1| = |r_0| + |r_1| =  n+1$. 
Finally, it suffices to apply Theorem \ref{thm:initial_nonrepetitive_complexity_arnoux_rauzy} for $B_\uu(k) < n < B_\uu(k+1)$ such that $i_k \neq i_{k+1}$:
$$
inr\mathcal{C}_\uu(n) = |\varphi_{i_0}\varphi_{i_1} \cdots \varphi_{i_{k}}(i_{k+1})| = |\varphi_{i_0}\varphi_{i_1} \cdots \varphi_{i_{k-1}}(i_ki_{k+1})| = |r_0r_1| = n+1.
$$
\begin{figure}[h]
\begin{center}
\includegraphics[scale=0.85]{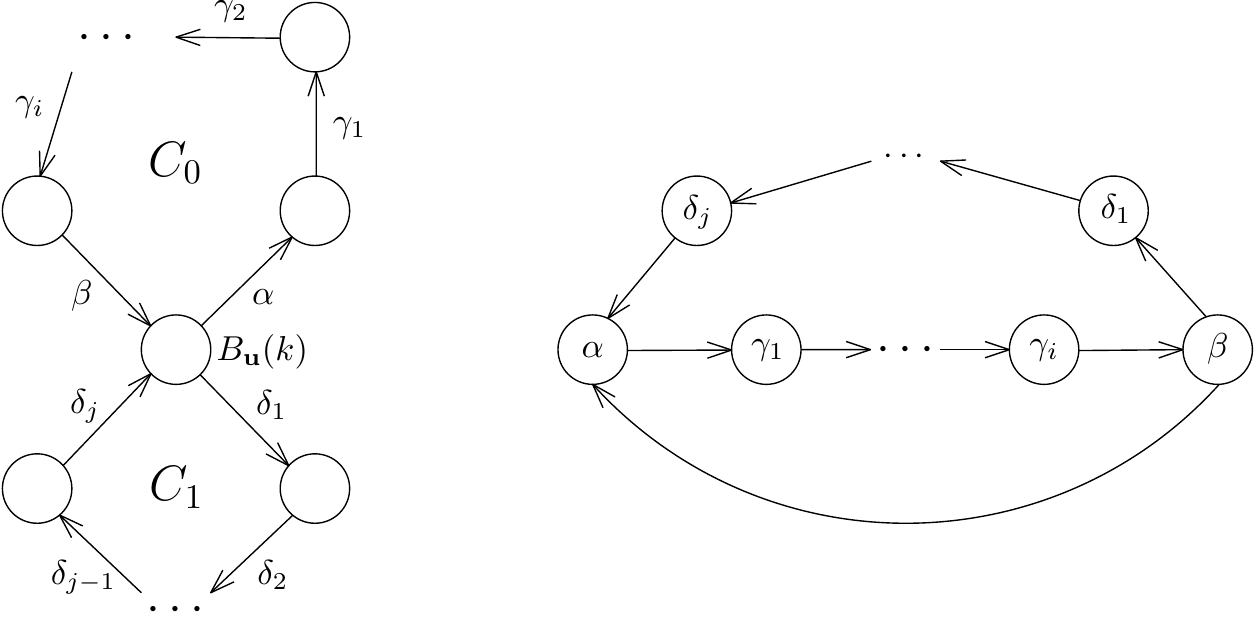}
\end{center}
\caption{The Rauzy graphs $\Gamma_\uu(n-1)$ (left) and $\Gamma_\uu(n)$ (right) of the standard Sturmian word $\uu$ for $n = |B_\uu(k)| +1$.} \label{fig:sturmian_initial_rauzy_graph}
\end{figure}
\end{proof} 

Recently, Bugead and Kim \cite{BuKi19} proved the new characterization of Sturmian words using the initial non-repetitive function: an infinite word $\uu$ is Sturmian if and only if $inr\mathcal{C}_\uu(n) \leq n+1$ for all $n \in \N$ with the equality for infinitely many $n$.  
So their result is more general than the previous corollary.

\section{Enumeration of non-repetitive complexity for $d$-bonacci word}

In this section we demonstrate the usefulness of Theorems \ref{thm:nonrepetitive_complexity_arnoux_rauzy} and \ref{thm:initial_nonrepetitive_complexity_arnoux_rauzy} on the $d$-bonacci words.
Let us recall that the $d$-bonacci word $\tt$ (see Example \ref{exam:d_bonacci}) is the fixed point of the morphism
\begin{equation*} 
\tau  \ : \  
\left\{ 
\begin{array}{ccl}
a &\to& 0(a+1) \ \text{ for } a = 0, \ldots, d-2\,\\   
(d-1)& \to &0\,
\end{array}    
\right. 
\text{ with the matrix } 
\boldsymbol{M}= 
\left( \begin{matrix}
1 & 1 & 1 & \cdots & 1\\
1 & 0 & 0 & \cdots & 0\\
0 & 1 & 0 & \cdots & 0\\
\vdots & & \ddots & \ddots & \vdots\\
0 & 0 & \cdots &   1& 0
\end{matrix} \right) 
\,.
\end{equation*} 

In the sequel, we will use so-called \textit{$d$-bonacci numbers}, which are the natural generalizations of the famous Fibonacci numbers.
The sequence of the $d$-bonacci numbers $(D_k)_{k\geq 0}$ is defined by the linear recurrence:
$$
D_k = \sum_{j = 1}^d D_{k-j}\ \text{ for } k \geq d \quad \text{ and } \quad D_k = 2^k \ \text{ for all } k = 0, 1, \ldots, d-1  \,.
$$
Equivalently, the $d$-bonacci numbers can be expressed using the matrix recurrence. 
To simplify the notation we put  $D_{-1} = 1$ and $D_{-k} = 0$ for all $k = 2, \ldots, d$.
We also denote the vector $\vec{D}(n) = (D_n, D_{n-1}, \ldots, D_{n-d+1})^\top$ for all $n \geq -1$.  Then the  recurrence relation for the $d$-bonacci numbers can be rewritten in the following vector form: 
$$ 
\vec{D}(n) = \boldsymbol{M}\vec{D}(n-1) \quad 
\text{ for all } n \in \N \quad  \text{ and } \quad \vec{D}(-1)  = (1, 0, \ldots, 0)^\top = :\vec{e}\,,
$$   
where  $\boldsymbol{M}$ is the matrix of the $d$-bonacci morphism $\tau$.
Obviously, we can write 
\begin{equation} \label{eq:bonacci_numbers}
\vec{D}(n) = \boldsymbol{M}^{n+1} \vec{e}\,. 
\end{equation}

The simple form of the morphism $\tau$ gives us immediately the relation between the consecutive bispecial factors in the $d$-bonacci word $\tt$ (compare with Claim \ref{claim:derived_word_of_preimage}), which allows us to express the lengths of the bispecial factors of $\tt$.

\begin{claim} \label{claim:bonacci_bispecial_factors}
For every $k \geq 1$ the bispecial factors of the $d$-bonacci word $\tt$ fulfil the equation 
$$B_\tt(k) = \tau(B_\tt(k-1))0\,.$$
\end{claim}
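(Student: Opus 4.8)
The plan is to use the $S$-adic structure of Arnoux--Rauzy words to relate bispecial factors of $\tt$ with those of its own shifted copy, exploiting that $\tt$ is a fixed point of $\tau$. Since $\tt = \tau(\tt)$, the word $\tt$ is its own image under the single morphism $\tau$, and $\tau = \varphi_0\varphi_1\cdots\varphi_{d-1}$ as computed in Example \ref{exam:d_bonacci}. I would first invoke Claim \ref{claim:derived_word_of_preimage} with the $d$ successive elementary morphisms $\varphi_{d-1}, \varphi_{d-2}, \ldots, \varphi_0$. Writing $\tt = \varphi_0\bigl(\varphi_1\cdots\varphi_{d-1}(\tt')\bigr)$ where $\tt'$ is the standard Arnoux--Rauzy word with directive sequence shifted by one, and iterating the first bullet of Claim \ref{claim:derived_word_of_preimage}, one obtains that applying all of $\varphi_0\varphi_1\cdots\varphi_{d-1} = \tau$ to a bispecial factor and appending the appropriate tail letters recovers the next bispecial factor after shifting the index by $d$.

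The key computation is to track exactly which letters get appended. By the first bullet of Claim \ref{claim:derived_word_of_preimage}, if $\bf v = \varphi_i(\bf w)$ then $B_{\bf v}(j+1) = \varphi_i\bigl(B_{\bf w}(j)\bigr)i$. Composing four... rather, $d$ such steps with $i$ running through $d-1, d-2, \ldots, 1, 0$ in the order forced by $\tau = \varphi_0\varphi_1\cdots\varphi_{d-1}$, I would show by a short induction on the number of composed morphisms that the accumulated suffix telescopes: the innermost appended letter $d-1$ gets pushed through $\varphi_{d-2},\ldots,\varphi_0$ and, because $\varphi_a$ fixes its own letter and prepends $a$ to every other letter, most of the appended material merges into $\tau(B_\tt(k-1))$, leaving precisely one trailing $0$. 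Concretely, one checks that $\varphi_0\varphi_1\cdots\varphi_{d-1}\bigl(B_{\tt'}(k-1)\bigr)$ together with the cascade of appended letters equals $\tau(B_\tt(k-1))0$, using that $B_{\tt'}(k-1)$ and $B_\tt(k-1)$ coincide (both $\tt$ and $\tt'$ have directive sequence $(0\,1\cdots d-1)^\omega$, since $\tau^d = \varphi_0\cdots\varphi_{d-1}$ forces periodicity, so the shift by $d$ returns the same directive sequence and hence the same word).

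An alternative, more self-contained route avoids Claim \ref{claim:derived_word_of_preimage} entirely: argue directly that $w$ is bispecial in $\tt$ if and only if $\tau(w)0$ is bispecial in $\tt = \tau(\tt)$, using the standard fact that for episturmian-type morphisms the preimage of a bispecial factor, padded by a short synchronizing suffix, is again bispecial, and that $\tau$ being injective on $\mathcal{L}_\tt$ preserves the left/right extension structure. One then orders these bispecial factors by length — noting $|\tau(w)0| = (1,\ldots,1)\boldsymbol{M}\vec{V}(w) + 1 > |w|$ by \eqref{eq:parikh_vector_of_image_via_incid_matrix} — to conclude that the $k^{th}$ one is sent to the $(k+1)^{th}$ one, which is exactly $B_\tt(k) = \tau(B_\tt(k-1))0$ after reindexing.

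The main obstacle is the bookkeeping of the appended letters in the telescoping cascade: one must verify carefully that composing the $d$ elementary morphisms in the correct left-to-right order produces exactly a single trailing $0$ and not some longer tail, and that no off-by-one error creeps into the index shift (the excerpt's indexing has $B_\tt(0) = \varepsilon$, $B_\tt(1) = u_0 = 0$, and Claim \ref{claim:derived_word_of_preimage} raises the index by $1$ per elementary morphism, so $d$ of them raise it by $d$, yet the claim asserts a shift by only $1$ — this apparent discrepancy is resolved because the $d$-step directive sequence is periodic with period $d$, so $\tt^{(d)} = \tt$ and the "shift by $d$" collapses to "shift by $1$ in the $\tau$-indexing"). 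Once this alignment is pinned down, the identity $B_\tt(k) = \tau(B_\tt(k-1))0$ follows, and as a corollary $|B_\tt(k)| = (1,1,\ldots,1)\cdot\vec{V}\bigl(\tau(B_\tt(k-1))\bigr) + 1$, which combined with \eqref{eq:bonacci_numbers} yields a closed form for $|B_\tt(k)|$ in terms of $d$-bonacci numbers.
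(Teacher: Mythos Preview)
Your primary route (Approach 1) has a genuine gap. Iterating Claim \ref{claim:derived_word_of_preimage} through all $d$ elementary morphisms relates $B_\tt(k)$ to $B_{\tt^{(d)}}(k-d)$, and since $\tt^{(d)}=\tt$ this yields a recursion of the form $B_\tt(k)=\tau\bigl(B_\tt(k-d)\bigr)\cdot w$ for some tail word $w$. This is \emph{not} the claimed identity, which links $B_\tt(k)$ to $B_\tt(k-1)$. Your proposed resolution --- that ``the shift by $d$ collapses to shift by $1$ in the $\tau$-indexing'' --- is not a valid step: the index $k$ in $B_\tt(k)$ is simply the rank of the bispecial factor by length, and there is no alternative indexing under which a shift by $d$ becomes a shift by $1$. (Concretely, for $d=3$ your computation would produce $B_\tt(k)$ in terms of $B_\tt(k-3)$ with a tail of length larger than $1$, whereas the claim is $B_\tt(k)=\tau(B_\tt(k-1))0$.)

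The fix is short and is essentially what the paper intends by ``the simple form of the morphism $\tau$ \ldots\ (compare with Claim \ref{claim:derived_word_of_preimage})'': apply Claim \ref{claim:derived_word_of_preimage} \emph{once}, with $i=0$, to get $B_\tt(k)=\varphi_0\bigl(B_{\tt^{(1)}}(k-1)\bigr)0$, and then use the factorisation $\tau=\varphi_0\circ P$ from the proof of Lemma \ref{lemma:dbonacci_return_words}. From $\tt=\tau(\tt)=\varphi_0(P(\tt))$ and $\tt=\varphi_0(\tt^{(1)})$ one obtains $\tt^{(1)}=P(\tt)$, hence $B_{\tt^{(1)}}(k-1)=P\bigl(B_\tt(k-1)\bigr)$, and therefore
\[
B_\tt(k)=\varphi_0\bigl(P(B_\tt(k-1))\bigr)0=\tau\bigl(B_\tt(k-1)\bigr)0.
\]
Equivalently --- and this is how the paper phrases it --- one observes that the proof of Claim \ref{claim:derived_word_of_preimage} uses only that every $\varphi_i$-image begins with the letter $i$; since every $\tau$-image begins with $0$ and $\tt=\tau(\tt)$, the identical argument gives $B_\tt(k)=\tau(B_\tt(k-1))0$ directly. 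Your ``alternative route'' (Approach 2) is exactly this direct argument and is correct, but you should promote it to the main proof rather than leaving it as an aside.
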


\begin{lemma} \label{lemma:dbonacci_bispecials}
For every $k \in \N$ the $k^{th}$ bispecial factor $B_{\tt}(k)$ of the $d$-bonacci word $\tt$ has the length
$$|B_{\bf t}(k)| = \frac{1}{d-1}\sum_{i=0}^{d-1} (d-i)D_{k-i-1} -\frac{d}{d-1}\,, \quad \text{ where $D_j$ is the $j^{th}$ $d$-bonacci number. }$$

\end{lemma}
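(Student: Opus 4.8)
The plan is to solve the recurrence for $|B_{\tt}(k)|$ coming from Claim~\ref{claim:bonacci_bispecial_factors} and express the answer in closed form via the $d$-bonacci numbers. First I would translate the relation $B_\tt(k) = \tau(B_\tt(k-1))0$ into a length recurrence. Applying the incidence matrix identity \eqref{eq:parikh_vector_of_image_via_incid_matrix} we get $\vec{V}(B_\tt(k)) = \boldsymbol{M}\,\vec{V}(B_\tt(k-1)) + \vec{e}$, where $\vec{e} = (1,0,\ldots,0)^\top$ accounts for the appended letter $0$. Iterating from the initial value $B_\tt(0)=\varepsilon$, i.e.\ $\vec{V}(B_\tt(0))=\vec{0}$, yields $\vec{V}(B_\tt(k)) = \sum_{j=0}^{k-1}\boldsymbol{M}^{j}\vec{e} = \sum_{j=0}^{k-1}\vec{D}(j-1)$ by \eqref{eq:bonacci_numbers}. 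Taking the length, which is the sum of coordinates of the Parikh vector, gives $|B_\tt(k)| = \sum_{j=0}^{k-1}\bigl(1,1,\ldots,1\bigr)\cdot\vec{D}(j-1) = \sum_{j=0}^{k-1}\sum_{i=0}^{d-1} D_{j-1-i}$.

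The remaining work is to collapse this double sum into the stated closed form. I would change the order of summation: $|B_\tt(k)| = \sum_{i=0}^{d-1}\sum_{j=0}^{k-1} D_{j-1-i} = \sum_{i=0}^{d-1} \sum_{\ell=-1-i}^{k-2-i} D_\ell$. Using the convention $D_{-1}=1$, $D_{-m}=0$ for $2\le m\le d$, each inner sum is a partial sum of consecutive $d$-bonacci numbers, and such partial sums telescope because of the defining recurrence. Concretely, from $D_{k}=\sum_{j=1}^{d}D_{k-j}$ one derives a ``summation identity'' of the form $\sum_{\ell\le N} D_\ell = c\,(\text{linear combination of } D_N, D_{N-1},\ldots,D_{N-d+2}) + \text{const}$, valid for $N\ge -1$; the constant $\tfrac{1}{d-1}$ and $-\tfrac{d}{d-1}$ in the statement are exactly what comes out of normalizing this identity (note $\sum_{j=1}^{d} 1 = d$, and the factor $d-1$ appears as $d-1 = d - 1$ from the ``characteristic value at $1$''). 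Plugging the telescoped inner sums back and regrouping the coefficients of each $D_{k-i-1}$ should produce the weights $(d-i)$ and the additive constant $-\tfrac{d}{d-1}$.

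An alternative, perhaps cleaner, route is to verify the claimed formula directly by induction on $k$: check the base cases $k=0,1,\ldots$ (small enough that $B_\tt(k)$ and the $D_j$ with negative/small indices are explicit), and then show the right-hand side $F(k):=\tfrac{1}{d-1}\sum_{i=0}^{d-1}(d-i)D_{k-i-1} - \tfrac{d}{d-1}$ satisfies $F(k) = (\text{sum of coordinates of }\boldsymbol{M}\,\vec V(B_\tt(k-1))) + 1$, i.e.\ that $F(k)$ obeys the same length recurrence as $|B_\tt(k)|$. This reduces to a linear identity among the $D_j$'s which follows from $D_k=\sum_{j=1}^d D_{k-j}$ after multiplying by the weights and summing; the additive constants match because $\tfrac{1}{d-1}\cdot(\text{something}) \mapsto$ the same something under the recurrence only up to the correction $-\tfrac{d}{d-1}$, which is precisely the fixed point of the affine map $x\mapsto$ (shifted weighted sum) $+1$.

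The main obstacle I anticipate is purely bookkeeping: keeping the index conventions straight near the boundary (the values $D_{-1}=1$ and $D_{-2}=\cdots=D_{-d}=0$, and how many negative-index terms appear in the inner sums for small $k$), and correctly identifying which linear combination of the $D_j$ the telescoped partial sum equals. I expect the cleanest presentation to be the induction argument of the previous paragraph, since it isolates the one nontrivial algebraic identity among $d$-bonacci numbers and avoids re-deriving the partial-sum formula from scratch. No genuinely deep step is involved — all the combinatorial content sits in Claim~\ref{claim:bonacci_bispecial_factors} and the matrix description of $\tau$, which we may assume.
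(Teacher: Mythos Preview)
Your setup coincides with the paper's: both start from Claim~\ref{claim:bonacci_bispecial_factors}, pass to the Parikh-vector recurrence $\vec V(k)=\boldsymbol{M}\vec V(k-1)+\vec e$, and iterate to $\vec V(k)=\sum_{j=0}^{k-1}\boldsymbol{M}^j\vec e$. The divergence is in how the sum is closed up. You propose to take lengths first and then telescope the resulting double sum of scalar $d$-bonacci numbers (or, alternatively, verify the closed form by induction). The paper instead stays at the matrix level: it multiplies by $\boldsymbol{M}-\boldsymbol{I}$ to collapse the geometric series to $\boldsymbol{M}^k\vec e-\vec e=\vec D(k-1)-\vec e$, then computes the single row vector $(1,\ldots,1)(\boldsymbol{M}-\boldsymbol{I})^{-1}=\tfrac{1}{d-1}(d,d-1,\ldots,1)$, from which the coefficients $(d-i)$ and the constant $-\tfrac{d}{d-1}$ drop out immediately. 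Your route is correct and arguably more elementary, but it incurs exactly the index bookkeeping you flag as the main obstacle; the paper's matrix-inversion trick sidesteps that entirely by packaging the telescoping into one linear-algebra computation. Either way the content is the same --- the factor $d-1$ is the value of $\det$-type normalization coming from $\boldsymbol{M}-\boldsymbol{I}$, which in your language is the ``characteristic value at $1$''.
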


\begin{proof}
We denote the Parikh vector of the $k^{th}$ bispecial factor $\vec{V}(k)$.
Then using Claim \ref{claim:bonacci_bispecial_factors} and Relation \eqref{eq:parikh_vector_of_image_via_incid_matrix} we may write: 
$$
\vec{V}(k) = \boldsymbol{M}\vec{V}(k-1) + \vec{e} \quad \text{ and so } \quad \vec{V}(k)  = \boldsymbol{M}^{k}\vec{V}(0) + (\boldsymbol{M}^{k-1} + \boldsymbol{M}^{k-2} + \cdots + \boldsymbol{M}^0)\vec{e}\,.
$$
Since $B_\tt(0) = \varepsilon$, it is $\vec{V}(0) = (0, \ldots, 0)^\top$ and 
$$
\vec{V}(k) = (\boldsymbol{M}^{k-1} + \boldsymbol{M}^{k-2} + \cdots + \boldsymbol{M}^0)\vec{e}\,.
$$
If we multiply this equality by the matrix $(\boldsymbol{M}-\boldsymbol{I})$, where $\boldsymbol{I}$ is the identity matrix, we get: 
$$
(\boldsymbol{M}-\boldsymbol{I})\vec{V}(k) = (\boldsymbol{M}^{k} + \boldsymbol{M}^{k-1} + \cdots + \boldsymbol{M}^{1} - \boldsymbol{M}^{k-1} - \cdots - \boldsymbol{M}^{0})\vec{e} = \boldsymbol{M}^k \vec{e} - \vec{e}\,.
$$
Finally, the application of Equation \eqref{eq:bonacci_numbers} gives us:
$$
\vec{V}(k) = (\boldsymbol{M}-\boldsymbol{I})^{-1}\left(\boldsymbol{M}^k \vec{e} - \vec{e}\right) = (\boldsymbol{M}-\boldsymbol{I})^{-1}\left(\vec{D}(k-1) - \vec{e}\right)\,.
$$
Now we can express the length of the $k^{th}$ bispecial factor as:
$$
|B_\tt(k)| = (1, \ldots, 1) \cdot \vec{V}(k) 
= (1, \ldots, 1) \cdot  (\boldsymbol{M}-\boldsymbol{I})^{-1}\left(\vec{D}(k-1) - \vec{e}\right)\,.
$$
It suffices to compute the inverse matrix $(\boldsymbol{M}-\boldsymbol{I})^{-1}$. 
One can verify that it is 
$$
(\boldsymbol{M}-\boldsymbol{I})^{-1} = \frac{1}{d-1}
\left( 
\begin{matrix}
1 & d-1 & d-2 & \cdots & 2 & 1 \\
1 & 0 & d-2 & \cdots & 2 & 1 \\
1 & 0 & -1 & \cdots & 2 & 1 \\
\vdots & \vdots & \vdots & \ddots &  & \vdots \\
1 & 0 & -1 & \cdots & -d+3 & 1 \\
1 & 0 & -1 & \cdots & -d+3 & -d+2
\end{matrix}
\right)
$$
and thus
$(1, \ldots, 1) \cdot  (\boldsymbol{M}-\boldsymbol{I})^{-1}=\frac{1}{d-1}(d, d-1, d-2, \ldots, 1)$. Consequently,

\begin{align*}
|B_\tt(k)| 
&= \frac{1}{d-1}(d, d-1, d-2, \ldots, 1)\vec{D}(k-1) - \frac{d}{d-1} \\
&= \frac{1}{d-1}\sum_{i=0}^{d-1} (d-i)D_{k-i-1} - \frac{d}{d-1}\,.
\end{align*}
\end{proof}

To find the simple expression for the lengths of the return words to $B_\tt(k)$, we state one more auxiliary lemma. 
Let us remind that the $d$-bonacci word has the directive sequence $(0\, 1\, 2 \cdots d-1)^\omega$, as explained in Example \ref{exam:d_bonacci}. 

\begin{lemma}  \label{lemma:dbonacci_return_words}
For the $d$-bonacci word with the directive sequence $(i_n)_{n \in \N} = (0\, 1\, 2 \cdots d-1)^\omega$ and for every integer $k \geq 1$ we have
$$
|\varphi_{i_0} \varphi_{i_1} \cdots \varphi_{i_{k-1}}(i_k)| = |\tau^k(0)| = D_k\,, \quad \text{ where $D_{k}$ is the $k^{th}$ $d$-bonacci number.}
$$

\end{lemma}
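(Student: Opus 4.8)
The plan is to reduce the statement to two elementary word identities. Recall from Example~\ref{exam:d_bonacci} that the directive sequence of $\tt$ satisfies $i_n \equiv n \pmod{d}$ and that $\tau^d = \varphi_0\varphi_1\cdots\varphi_{d-1}$. Given $k \geq 1$, write $k = qd + s$ with $q \geq 0$ and $0 \leq s \leq d-1$, and abbreviate $\psi_s = \varphi_0\varphi_1\cdots\varphi_{s-1}$, so that $\psi_0$ is the identity. Grouping the first $qd$ of the elementary morphisms in $\varphi_{i_0}\varphi_{i_1}\cdots\varphi_{i_{k-1}}$ into $q$ consecutive blocks, each equal to $\varphi_0\varphi_1\cdots\varphi_{d-1} = \tau^d$, and noting that $i_k = s$, one obtains $\varphi_{i_0}\varphi_{i_1}\cdots\varphi_{i_{k-1}}(i_k) = \tau^{qd}\bigl(\psi_s(s)\bigr)$. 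Hence it suffices to prove two facts: $(\mathrm{A})$ $\psi_s(s) = \tau^s(0)$ for every $0 \leq s \leq d-1$ (which then gives $\varphi_{i_0}\cdots\varphi_{i_{k-1}}(i_k) = \tau^{qd}(\tau^s(0)) = \tau^k(0)$), and $(\mathrm{B})$ $|\tau^k(0)| = D_k$.

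For $(\mathrm{B})$, I would compute the Parikh vector. Since $\boldsymbol{M}_\tau = \boldsymbol{M}$, iterating \eqref{eq:parikh_vector_of_image_via_incid_matrix} gives $\vec{V}(\tau^k(0)) = \boldsymbol{M}^k\vec{e}$, which by \eqref{eq:bonacci_numbers} equals $\vec{D}(k-1) = (D_{k-1}, D_{k-2}, \ldots, D_{k-d})^\top$. Multiplying on the left by $(1,\ldots,1)$ yields $|\tau^k(0)| = \sum_{j=1}^{d} D_{k-j}$; this equals $D_k$ by the defining recurrence when $k \geq d$, and equals $(2^{k-1} + \cdots + 2^0) + D_{-1} = 2^k = D_k$ when $1 \leq k \leq d-1$, using the convention $D_{-1} = 1$ and $D_{-2} = \cdots = D_{-d} = 0$.

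For $(\mathrm{A})$, the bare statement $\psi_s(s) = \tau^s(0)$ does not close under induction on $s$, since expanding $\psi_{s+1}(s+1) = \psi_s\bigl(\varphi_s(s+1)\bigr) = \psi_s(s)\psi_s(s+1)$ forces one to understand $\psi_s$ on the letter $s+1$ as well. So I would prove the stronger claim that $\psi_s(m) = \tau^s(m-s)$ for all integers $0 \leq s \leq m \leq d-1$, by induction on $s$. The base $s = 0$ is immediate. For the inductive step with $m \in \{s+1, \ldots, d-1\}$, using $\varphi_s(m) = sm$ and the induction hypothesis applied to the letters $s$ and $m$ gives $\psi_{s+1}(m) = \psi_s(s)\psi_s(m) = \tau^s(0)\,\tau^s(m-s) = \tau^s\bigl(0(m-s)\bigr)$; since $0 \leq m-s-1 \leq d-2$ we have $\tau(m-s-1) = 0(m-s)$, whence $\psi_{s+1}(m) = \tau^s\bigl(\tau(m-s-1)\bigr) = \tau^{s+1}\bigl(m-(s+1)\bigr)$, completing the induction. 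Specializing to $m = s$ gives $(\mathrm{A})$.

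The only real obstacle is finding the correct strengthening in $(\mathrm{A})$: once one tracks the image of every letter $m \geq s$ under $\psi_s$, each inductive step is a one-line computation because the morphisms $\varphi_i$ and $\tau$ are so simple. The block decomposition of $\varphi_{i_0}\cdots\varphi_{i_{k-1}}$ and the matrix computation in $(\mathrm{B})$ are then routine, given $\tau^d = \varphi_0\varphi_1\cdots\varphi_{d-1}$ and \eqref{eq:bonacci_numbers}.
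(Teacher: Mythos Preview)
Your proof is correct, but both halves proceed differently from the paper.

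For the identity $\varphi_{i_0}\cdots\varphi_{i_{k-1}}(i_k)=\tau^k(0)$, the paper introduces the cyclic permutation $P$ with $P(a)\equiv a+1\pmod d$, observes $\tau=\varphi_0\circ P$ and $P\circ\varphi_a=\varphi_{a+1}\circ P$, and iterates to obtain the global morphism identity $\tau^k=\varphi_{i_0}\varphi_{i_1}\cdots\varphi_{i_{k-1}}\circ P^k$; applying this to $0$ gives the result in one stroke. Your route avoids $P$ entirely: you collapse full blocks via $\tau^d=\varphi_0\cdots\varphi_{d-1}$ and then deal with the residual prefix $\psi_s$ by the strengthened induction $\psi_s(m)=\tau^s(m-s)$. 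The conjugation trick is cleaner and more conceptual (one commutation rule replaces the case analysis), whereas your argument is more hands-on but entirely self-contained within the elementary morphisms.

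For $|\tau^k(0)|=D_k$, the paper establishes by induction on $k$ the recurrences $|\tau^k(a)|=\sum_{j=1}^{d-a}|\tau^{k-j}(0)|$ for every letter $a$, and then reads off the $d$-bonacci recurrence at $a=0$. Your Parikh-vector computation $\vec V(\tau^k(0))=\boldsymbol{M}^k\vec e=\vec D(k-1)$ is shorter and exploits exactly the matrix identity~\eqref{eq:bonacci_numbers} that the paper already set up; it sidesteps the auxiliary recurrences for $a\neq 0$ at the cost of a small case check for $1\le k\le d-1$ using the conventions on $D_{-j}$.
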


\begin{proof}
One can simply verify that $\tau = \varphi_0 \circ P$, where $P$ is a permutation such that $P(a) \equiv a+1 \mod d$ for all $a \in \{0, 1, \ldots, d-1\}$.
It is also easy to realize that $P \circ \varphi_a = \varphi_{b} \circ P$ for every $a,b \in \{0, 1, \ldots, d-1\}$ such that $b \equiv a+1 \mod d$.
These two facts give us  
\begin{multline*}
\tau^k = (\varphi_0 \circ P)^k 
= \varphi_0 \circ (P \circ \varphi_0)^{k-1} \circ P
= \varphi_0 \circ (\varphi_1 \circ P)^{k-1} \circ P 
= \varphi_0 \varphi_1 \circ (P \circ \varphi_1)^{k-2} \circ P^2  = \cdots \\ 
=  \varphi_{j_0}\varphi_{j_1} \cdots \varphi_{j_{k-1}}P^k\,,
\end{multline*}
where $j_n \in \mathcal{A}$ and $j_n \equiv n \mod d$.
But since the sequence $(j_n)_{n \in \N}$ is exactly the directive sequence of the $d$-bonacci word, i.e., $j_n = i_n$ for every $n \in \N$, we may conclude that  
$$
\tau^k(0)
= \varphi_{j_0}\varphi_{j_1} \cdots \varphi_{j_{k-1}}P^k(0) 
= \varphi_{j_0}\varphi_{j_1} \cdots \varphi_{j_{k-1}}(j_k)
= \varphi_{i_0} \varphi_{i_1} \cdots \varphi_{i_{k-1}}(i_k)\,.
$$

It remains to prove that $|\tau^k(0)| = D_{k}$. We will prove that both sequences $(|\tau^n(0)|)_{n \in \N}$ and $(D_n)_{n \in \N}$ fulfil the same linear recurrence with the same initial conditions.
In fact, we will show that for every $a \in \mathcal{A}$ the following equalities hold:  
\begin{equation} \label{eq:tau_recurrence}
|\tau^{k}(a)| = \sum_{j = 1}^{d-a} |\tau^{k-j}(0)| \ \text{ for all } k \geq d-a \quad \text{ and } \quad
|\tau^k(a)| = 2^k \ \text{ for all } k = 0, \ldots, d-a-1  \,.
\end{equation}
We will proceed by induction on $k$.
Simple computations verify the initial conditions. 
Now we suppose that the equality is true for $k-1$ and every letter $a \in \mathcal{A}$ and we prove that it is true also for $k$.
If $a = d-1$, it is clear since $\tau^{k}(d-1) = \tau^{k-1}(0)$.
If $a \neq d-1$, we rewrite as follows:   
$$
|\tau^{k}(a)| = |\tau^{k-1}(0)| + |\tau^{k-1}(a+1)| =  |\tau^{k-1}(0)| + \sum_{j = 1}^{d-a-1} |\tau^{k-1-j}(0)| = \sum_{j = 1}^{d-a} |\tau^{k-j}(0)|\,.
$$

If we consider the relations \eqref{eq:tau_recurrence} for the letter $a = 0$, we get exactly the same recurrence as in the case of $d$-bonacci numbers. 
Thus these two sequences are the same and $|\tau^k(0)| = D_k$.
\end{proof}

\begin{theorem}
Let $\tt$ be the $d$-bonacci word and let $n, k$ be positive integers such that 
$$
\frac{1}{d-1}\sum_{i=0}^{d-1} (d-i)D_{k-i-2} -\frac{d}{d-1} < n \leq  \frac{1}{d-1}\sum_{i=0}^{d-1} (d-i)D_{k-i-1} -\frac{d}{d-1}\,.
$$
Then
$$
nr\mathcal{C}_\tt(n) = D_{k+1} -1 - \frac{1}{d-1}\sum_{i=0}^{d-1}(d-i)D_{k-i-1} +\frac{d}{d-1} + n\,.
$$
\end{theorem}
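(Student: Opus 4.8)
The plan is to derive the formula by specialising Theorem~\ref{thm:nonrepetitive_complexity_arnoux_rauzy} to the $d$-bonacci word $\tt$ and then rewriting each of its three ingredients — the length $|B_\tt(k)|$, the length of the relevant return word, and the extremal letter $a$ — by means of Lemmas~\ref{lemma:dbonacci_bispecials} and~\ref{lemma:dbonacci_return_words}. First I would check that the hypothesis imposed on $n$ is exactly the condition $|B_\tt(k-1)| < n \le |B_\tt(k)|$ required by Theorem~\ref{thm:nonrepetitive_complexity_arnoux_rauzy}: substituting $k-1$ for $k$ in the formula of Lemma~\ref{lemma:dbonacci_bispecials} turns each term $D_{k-i-1}$ into $D_{k-i-2}$, so the left endpoint of the interval in the statement is precisely $|B_\tt(k-1)|$ and the right endpoint is $|B_\tt(k)|$. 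Hence the integer $k$ in the statement coincides with the one produced by Theorem~\ref{thm:nonrepetitive_complexity_arnoux_rauzy}; and since $\tt$ is standard, its own directive sequence $(i_n)_{n\ge 0} = (0\,1\,\cdots\,d-1)^\omega$ is the one to be used.

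The next step is to pin down the letter $a \ne i_k$ that realises $S_a(k) = \inf\{S_b(k) : b \ne i_k\}$. Since $i_\ell \equiv \ell \pmod d$, for a letter $b$ the quantity $S_b(k)$ is the largest $\ell < k$ with $\ell \equiv b \pmod d$ (or $-\infty$ if there is none). As $b$ ranges over the residues different from $k \bmod d$, this value is smallest for the residue $k+1 \pmod d$: it equals $k-d+1$ when $k \ge d$, it equals $-\infty$ when $k \le d-2$ (that letter not having occurred yet), and for $k=d-1$ the minimal value $0$ is likewise attained there. Therefore one may take $a = i_{k+1}$, the letter congruent to $k+1$ modulo $d$. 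Plugging this into Theorem~\ref{thm:nonrepetitive_complexity_arnoux_rauzy} gives
$$
nr\mathcal{C}_\tt(n) = |\varphi_{i_0}\varphi_{i_1}\cdots\varphi_{i_{k-1}}\varphi_{i_k}(i_{k+1})| - 1 - |B_\tt(k)| + n .
$$

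To finish I would invoke Lemma~\ref{lemma:dbonacci_return_words} with $k+1$ in place of $k$: it identifies $\varphi_{i_0}\cdots\varphi_{i_k}(i_{k+1})$ with $\tau^{k+1}(0)$, whose length is $D_{k+1}$; and I would replace $|B_\tt(k)|$ by the expression $\frac{1}{d-1}\sum_{i=0}^{d-1}(d-i)D_{k-i-1} - \frac{d}{d-1}$ from Lemma~\ref{lemma:dbonacci_bispecials}. Collecting the terms yields the asserted identity. The only point that needs genuine care is the determination of the extremal letter $a$: one has to exploit the strictly periodic, round-robin structure of the directive sequence to argue that the ``least recently used'' residue among the letters $\ne i_k$ is $k+1 \bmod d$, and to keep track of the small-$k$ regime where several of the $S_b(k)$ equal $-\infty$ — here the paper's observation that two distinct letters share the value of $S_{\cdot}(k)$ only when both are $-\infty$ makes the choice well defined. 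Everything else is a routine substitution together with the index shift between $B_\tt(k-1)$ and $B_\tt(k)$.
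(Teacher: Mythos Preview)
Your proposal is correct and follows essentially the same route as the paper: apply Theorem~\ref{thm:nonrepetitive_complexity_arnoux_rauzy}, identify the interval bounds with $|B_\tt(k-1)|$ and $|B_\tt(k)|$ via Lemma~\ref{lemma:dbonacci_bispecials}, argue that the extremal letter is $a=i_{k+1}$ from the periodic directive sequence, and then use Lemma~\ref{lemma:dbonacci_return_words} (with $k+1$ in place of $k$) to obtain $D_{k+1}$. Your treatment of the choice of $a$, including the small-$k$ case where several $S_b(k)$ equal $-\infty$, is in fact more careful than the paper's one-line remark.
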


\begin{proof}
It follows directly from Theorem \ref{thm:nonrepetitive_complexity_arnoux_rauzy}. 
It suffices to replace the lengths of the bispecial factors by the expressions from Lemma \ref{lemma:dbonacci_bispecials} and determine the value of 
$$
|\varphi_{i_0}\varphi_{i_1} \cdots \varphi_{i_{k-1}}(i_k a)|\,,$$
where $a \in \mathcal{A}$ is any letter such that  $S_a(k) = \inf \{S_b(k) : b  \in \mathcal{A}, b \neq i_k \}$.
Since $\tt$ has the directive sequence $(i_n)_{n \geq 0}$ given by $i_n \equiv n \mod d$, it is easy to realize that the desired letter $a$ is the letter $i_{k+1}$ (note that for $k < d-2$ there are also other possible choices of $a$). 
Then using Lemma \ref{lemma:dbonacci_return_words} we get
$$
|\varphi_{i_0}\varphi_{i_1} \cdots \varphi_{i_{k-1}}(i_k a)| 
= |\varphi_{i_0}\varphi_{i_1} \cdots \varphi_{i_{k-1}}(i_k i_{k+1})| 
= |\varphi_{i_0}\varphi_{i_1} \cdots \varphi_{i_{k}}(i_{k+1})| = |\tau^{k+1}(0)| = D_{k+1}\,.$$
\end{proof}

\begin{theorem} \label{thm:bonacci_initial_nonrepetitive_complexity}
Let $\tt$ be the $d$-bonacci word and let $n, k $ be positive integers such that 
$$
\frac{1}{d-1}\sum_{i=0}^{d-1} (d-i)D_{k-i-2} -\frac{d}{d-1} < n \leq  \frac{1}{d-1}\sum_{i=0}^{d-1} (d-i)D_{k-i-1} -\frac{d}{d-1}\,.
$$
Then $inr\mathcal{C}_\tt(n) = D_k \,.$
\end{theorem}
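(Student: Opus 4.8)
The plan is to derive this directly from Theorem~\ref{thm:initial_nonrepetitive_complexity_arnoux_rauzy}, exactly as the proof of the preceding (non-repetitive) theorem derives its formula from Theorem~\ref{thm:nonrepetitive_complexity_arnoux_rauzy}. First I would observe that the hypothesis on $n,k$ is, by Lemma~\ref{lemma:dbonacci_bispecials}, precisely the condition $|B_\tt(k-1)| < n \leq |B_\tt(k)|$ for the $d$-bonacci word $\tt$; indeed Lemma~\ref{lemma:dbonacci_bispecials} gives $|B_\tt(k)| = \frac{1}{d-1}\sum_{i=0}^{d-1}(d-i)D_{k-i-1} - \frac{d}{d-1}$ and $|B_\tt(k-1)| = \frac{1}{d-1}\sum_{i=0}^{d-1}(d-i)D_{k-i-2} - \frac{d}{d-1}$, so the two-sided inequality in the statement is exactly the defining condition of the unique index $k$ attached to $n$ in Theorem~\ref{thm:initial_nonrepetitive_complexity_arnoux_rauzy}.

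Since $\tt$ is a standard $d$-ary Arnoux--Rauzy word (Example~\ref{exam:d_bonacci}) with directive sequence $(i_n)_{n\geq 0}$ given by $i_n \equiv n \bmod d$, Theorem~\ref{thm:initial_nonrepetitive_complexity_arnoux_rauzy} applies and yields
$$
inr\mathcal{C}_\tt(n) = |\varphi_{i_0}\varphi_{i_1}\cdots\varphi_{i_{k-1}}(i_k)|\,.
$$
Then I would invoke Lemma~\ref{lemma:dbonacci_return_words}, which states exactly $|\varphi_{i_0}\varphi_{i_1}\cdots\varphi_{i_{k-1}}(i_k)| = |\tau^k(0)| = D_k$ for every $k\geq 1$, to conclude $inr\mathcal{C}_\tt(n) = D_k$.

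There is essentially no obstacle here: the theorem is a corollary of two results proved earlier in the section, and the only thing to check is that the numerical hypothesis matches the bispecial-length condition, which is an immediate substitution using Lemma~\ref{lemma:dbonacci_bispecials}. The one point deserving a sentence of care is the boundary case $k$ small (say $k=1$), where one should confirm that $B_\tt(0)=\varepsilon$ and $B_\tt(1)=0$ are handled consistently by the formula; but Lemma~\ref{lemma:dbonacci_return_words} is stated for $k\geq 1$ and $D_0, D_1$ are defined, so the formula $inr\mathcal{C}_\tt(n)=D_k$ remains valid for all $n\geq 1$ in the relevant ranges. Thus the proof is a two-line application of Theorem~\ref{thm:initial_nonrepetitive_complexity_arnoux_rauzy} together with Lemmas~\ref{lemma:dbonacci_bispecials} and~\ref{lemma:dbonacci_return_words}.
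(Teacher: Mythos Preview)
Your proposal is correct and matches the paper's proof essentially line for line: both apply Theorem~\ref{thm:initial_nonrepetitive_complexity_arnoux_rauzy}, identify the hypothesis on $n$ with $|B_\tt(k-1)| < n \le |B_\tt(k)|$ via Lemma~\ref{lemma:dbonacci_bispecials}, and then invoke Lemma~\ref{lemma:dbonacci_return_words} to get $|\varphi_{i_0}\cdots\varphi_{i_{k-1}}(i_k)| = D_k$.
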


\begin{proof} 
It follows directly from Theorem \ref{thm:initial_nonrepetitive_complexity_arnoux_rauzy}. 
It suffices to realize that by Lemma \ref{lemma:dbonacci_bispecials} we know the lengths of the bispecial factors of $\tt$ and by Lemma \ref{lemma:dbonacci_return_words} we have $|\varphi_{i_0}\varphi_{i_1} \cdots \varphi_{i_{k-1}}(i_{k})| =  |\tau^{k}(0)| = D_k$. 
\end{proof}

Note that for $d=2$ and $d= 3$ the previous theorem gives 
 the results  stated in \cite{NiRa16}  as Theorems  10 and 16.

\begin{corollary}  
Let ${\bf f}$  and $\tt$ be  the Fibonacci and the Tribonacci word, respectively. 

\begin{itemize}
\item Let $n, k$ be positive integers such that $F_k -2 < n \leq F_{k+1} -2$. Then $inr\mathcal{C}_{\bf f}(n) = F_k$, where $F_k$ is the $k^{th}$ Fibonacci number.  
\item Let $n, k$ be positive integers such that 
$\frac{T_k + T_{k-2} -3}{2} < n \leq \frac{T_{k+1} + T_{k-1} -3}{2}\,.$ 
Then $inr\mathcal{C}_\tt(n) = T_k$, where $T_k$ is the $k^{th}$ Tribonacci number.  
\end{itemize}
\end{corollary}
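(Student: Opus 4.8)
The plan is to obtain both items as the specializations $d=2$ and $d=3$ of Theorem~\ref{thm:bonacci_initial_nonrepetitive_complexity}, after rewriting the bounds on $n$ in closed form. First I would observe that, by Lemma~\ref{lemma:dbonacci_bispecials}, the two inequalities bounding $n$ in Theorem~\ref{thm:bonacci_initial_nonrepetitive_complexity} are precisely $|B_\tt(k-1)|<n\le|B_\tt(k)|$, and that the value $D_k$ appearing there equals $|\tau^k(0)|$ by Lemma~\ref{lemma:dbonacci_return_words}. Hence the corollary reduces to simplifying $|B_\tt(k)|$ for $d=2$ and $d=3$ by means of the defining recurrence of the $d$-bonacci numbers, which is valid for every index $\ge 0$ thanks to the conventions $D_{-1}=1$ and $D_{-\ell}=0$ for $2\le\ell\le d$, and then to matching the $d$-bonacci numbers with the Fibonacci, resp.\ Tribonacci, numbers.

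For $d=2$, Lemma~\ref{lemma:dbonacci_bispecials} gives $|B_\tt(k)|=2D_{k-1}+D_{k-2}-2$, and $2D_{k-1}+D_{k-2}=D_{k-1}+(D_{k-1}+D_{k-2})=D_{k-1}+D_k=D_{k+1}$, so $|B_\tt(k)|=D_{k+1}-2$ and $|B_\tt(k-1)|=D_k-2$. Writing $F_k$ for the Fibonacci numbers (so that the $2$-bonacci number $D_k$ is $F_{k+2}$) and relabelling the running index, the interval becomes $F_k-2<n\le F_{k+1}-2$ and the value $inr\mathcal{C}_{\bf f}(n)=F_k$, which is the first bullet. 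The second bullet is the same computation for $d=3$: here $|B_\tt(k)|=\frac{1}{2}\bigl(3D_{k-1}+2D_{k-2}+D_{k-3}\bigr)-\frac{3}{2}$, and expanding $D_{k+1}+D_{k-1}=\bigl(D_k+D_{k-1}+D_{k-2}\bigr)+D_{k-1}$ and then $D_k=D_{k-1}+D_{k-2}+D_{k-3}$ shows $3D_{k-1}+2D_{k-2}+D_{k-3}=D_{k+1}+D_{k-1}$; therefore $|B_\tt(k)|=\frac{D_{k+1}+D_{k-1}-3}{2}$ and $|B_\tt(k-1)|=\frac{D_k+D_{k-2}-3}{2}$, and identifying $D_k$ with the Tribonacci number $T_{k+2}$ and relabelling the index gives exactly $\frac{T_k+T_{k-2}-3}{2}<n\le\frac{T_{k+1}+T_{k-1}-3}{2}$ together with $inr\mathcal{C}_\tt(n)=T_k$.

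The argument is only a substitution into Theorem~\ref{thm:bonacci_initial_nonrepetitive_complexity} followed by a one-line manipulation of a linear recurrence, so there is no real obstacle. The only place that deserves a moment's attention is the bookkeeping at small indices: one must apply the recurrence $D_j=\sum_{\ell=1}^{d}D_{j-\ell}$ only where it is valid, and pin down the index convention for $F_k$ and $T_k$ so that the two intervals in the statement read verbatim as claimed; with these conventions the corollary also recovers Theorems~10 and~16 of \cite{NiRa16} for $d=2$ and $d=3$.
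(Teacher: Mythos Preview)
Your proposal is correct and follows exactly the paper's approach: the corollary is presented there merely as the specialisations $d=2$ and $d=3$ of Theorem~\ref{thm:bonacci_initial_nonrepetitive_complexity}, with no further argument given. Your computations simplifying $|B_\tt(k)|$ via the $d$-bonacci recurrence (which, thanks to the conventions $D_{-1}=1$ and $D_{-\ell}=0$ for $2\le\ell\le d$, is valid for all $k\ge 0$) are accurate, and the index shift matching $D_k$ with $F_{k+2}$, resp.\ $T_{k+2}$, is the right bookkeeping to recover the stated intervals verbatim.
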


\end{document}